\documentclass[a4paper,10pt,reqno]{article}
\usepackage{amsmath}
\usepackage{amssymb}
\usepackage{amsthm}
\usepackage[colorlinks=true, bookmarks=true, pdfstartview=FitH, linktocpage=true, linkcolor = magenta, citecolor = blue]{hyperref}
\usepackage{tikz-cd}
\usepackage{multicol}
\usepackage{indentfirst}

\hypersetup{
	colorlinks = true,
	linkcolor = magenta,
	citecolor = blue,
}
\newtheorem{thm}{Theorem}
\newtheorem{lem}{Lemma} 

\newtheorem{cor}{Corollary}

\newtheorem*{prbs}{Problem $\mathcal{P}(\Omega^\varepsilon)$}
\newtheorem*{prbse}{Problem $\mathcal{P}(\varepsilon;\Omega)$}
\newtheorem*{prbsm}{Problem $\mathcal{P}_M(\omega)$}

\begin{document}
		
		\title{Derivation of a linearly elastic elliptic membrane shell theory in the framework of magnetoelasticity}
		\author{Trung Hieu Giang
			\footnote{Addresses of Trung Hieu Giang: Department of Mathematical Analysis, Faculty of Mathematics and Physics, Charles University, Praha, Czech Republic.  Institute of Mathematics, Vietnam Academy of Science and Technology, 18 Hoang Quoc Viet, Hanoi, Vietnam. \textit{Email address: trung-hieu.giang@matfyz.cuni.cz}
			} 
		}
		\date{\today}
		\maketitle
		
		\begin{abstract}
			Starting from a three-dimensional model of linearized magnetoelasticity, we investigate the asymptotic behavior of a thin shell as its thickness approaches zero. Our analysis focuses on a family of linearly elastic elliptic membrane shells. Using $\Gamma$-convergence, we rigorously derive a two-dimensional linear magnetoelastic shell model and establish its relationship to the original three-dimensional model in terms of the convergence of minimizers.
		\end{abstract}
		
		\
		
		\noindent 
		{\textbf{2020 Mathematics Subject Classification:} 74F15, 74B15, 74K15, 49J45}
		
		\
		
		\noindent 
		{\textbf{Keywords:} Elliptic Membrane Shells; Linearized Elasticity; Magnetoelasticity; Gamma-convergence}
		
		
		\section{Introduction}
		\label{sec1}
		
		Magnetoelasticity describes how solids behave mechanically under the influence of magnetic fields. The magnetoelastic coupling is based on the presence of small magnetic domains within the material \cite{HS98}. On the one hand, these domains, given an external magnetic field, tend to orient themselves to produce a magnetically induced deformation of the body. On the other hand, the mechanical effect alters the orientation of these domains, thereby changing the material's magnetic response. For a deeper foundation of magnetoelasticity, we refer the reader to \cite{HS98, Bro66, DO14}.
		
		The mathematical study of magnetoelasticity has become a very active field in the past few decades, and thus, we will only provide a brief review here. In the case of \textit{nonlinear models}, the mathematical formulation in this framework often involves both the energy terms defined on the original stress-free configuration and the deformed state. Thus, the model has a \textit{Lagrangian-Eulerian} structure, and from a mathematical point of view, the analysis of such structures is very challenging. A well-known difficulty is that the magnetization is defined on the unknown deformed configuration. Therefore, suitable injectivity conditions on the deformation are needed to prevent interpenetration of matter and to justify the corresponding changes of variables. A classical way to enforce almost everywhere injectivity is the Ciarlet--Ne\v{c}as condition (see \cite{Cia88,CN87}). For this reason, in recent years, magnetoelasticity has received much attention from the mathematical community, focusing on aspects such as existence theory, dimensional reduction, etc. In the context of existence theory for magnetoelasticity, we refer to Kru\v z\'ik et al. \cite{KSZ15} and Bresciani et al. \cite{BDK23}. In the context of dimensional reduction, Bresciani \cite{Bre21} derived a linearized von K\'arm\'an model from nonlinear three-dimensional incompressible magnetoelastic plates in the static setting by means of $\Gamma$-convergence. Subsequently, Bresciani and Kru\v z\'ik \cite{BK23} also obtained linearized von K\'arm\'an models for compressible magnetoelastic plates in both static and quasistatic settings. Davoli et al. \cite{DKPS21} derived a nonlinear two-dimensional model for flat magnetoelastic thin films also using the $\Gamma$-convergence. In the case of \textit{linear models}, Kru\v z\'ik et al. \cite{KSZa15} derived a two-dimensional quasistatic plate model starting from three-dimensional linear magnetoelasticity. Here, it should be noted that the use of such a linear elastic parent model can itself be justified through a small strain limit. In classical elasticity, Dal Maso, Negri, and Percivale \cite{DMNP02} rigorously derived linearized elasticity from finite elasticity by means of $\Gamma$-convergence. In the magnetoelastic setting, Almi et al. \cite{AKM25} subsequently established the corresponding linearization result for a nonlinear magnetoelastic model. The resulting linear model is the starting point of our analysis. 
		
		To the best of our knowledge, no linear two-dimensional shell theory has been derived from the three-dimensional model of Almi et al. mentioned above. The derivation of two-dimensional shell theories in linear elasticity has been extensively studied. In this direction, we would like to mention \cite{Cia00} for the results regarding purely linear elastic shell models. We would also like to mention, for example, \cite{CMP18, CMP19, Pie22} for models subjected to confinement conditions; \cite{CA18, CA19} for viscoelasticity; and \cite{Pie21} for thermoelasticity. Inspired by the aforementioned results, the aim of our contribution is to establish a relationship between three-dimensional and two-dimensional models of magnetoelasticity for thin shells.
		
		In particular, we start from the following model given by Almi, Kru\v z\'ik, and Molchanova \cite{AKM25} (which is also discussed in \cite{DJ02})
		\begin{align}
			\label{sec1-eqn1}
			\mathcal{G}(\boldsymbol u,\boldsymbol m)
			:={}&\frac12\int_\Omega \mathbb C\big(\boldsymbol\epsilon(\boldsymbol u)+\boldsymbol e(\boldsymbol m)\big):
			\big(\boldsymbol\epsilon(\boldsymbol u)+\boldsymbol e(\boldsymbol m)\big)\,\mathrm{d} x \nonumber\\
			&+\frac12\int_\Omega|\nabla\boldsymbol m|^2\,\mathrm{d} x
			+\frac{\mu_0}{2}\int_{\mathbb R^3}|\nabla v_{\boldsymbol m}|^2\,\mathrm{d} x \nonumber\\
			& - \int_\Omega \boldsymbol{f} \cdot \boldsymbol{u} \,\mathrm{d} x - \int_\Omega \boldsymbol{h} \cdot \boldsymbol{m} \,\mathrm{d} x
		\end{align}
		for each $(\boldsymbol{u}, \boldsymbol{m}) \in (H^1(\Omega))^3 \times (H^1(\Omega))^3$, where $\Omega \subset \mathbb{R}^3$ is a domain, $\mathbb{C}$ is a positive definite tensor,
		\begin{equation}
			\label{sec1-eqn2}
			\boldsymbol\epsilon(\boldsymbol u)=\frac12(\nabla\boldsymbol u+\nabla\boldsymbol u^T),
			\qquad
			\boldsymbol e(\boldsymbol m)=-\boldsymbol m\otimes\boldsymbol m+\frac13\boldsymbol I,
			\qquad |\boldsymbol m|=1,
		\end{equation}
		$\boldsymbol{I}$ is the $3\times 3$ identity matrix, $\boldsymbol{f} \in L^2(\Omega;\mathbb{R}^3)$ is a body force, $\boldsymbol{h} \in L^2(\mathbb{R}^3;\mathbb{R}^3)$ represents an external magnetic field, and the magnetostatic potential satisfies
		\begin{equation}
			\label{sec1-eqn3}
			\operatorname{div}\big(-\mu_0\nabla v_{\boldsymbol m}+\chi_\Omega\boldsymbol m\big)=0
			\quad\text{in }\mathcal D'(\mathbb R^3).
		\end{equation}
		
		In \eqref{sec1-eqn1}, the first three terms represent the elastic, exchange, and magnetostatic energies, respectively. The positive constant $\mu_0$ is the magnetic permeability of vacuum, and $v_{\boldsymbol{m}}$ is the magnetostatic potential generated by $\boldsymbol{m}$, which is a solution to the Maxwell equation \eqref{sec1-eqn3} with $\chi_\Omega$ being the characteristic function. We would like to emphasize that the model is linear with respect to the displacement, but remains nonlinear in $\boldsymbol{m}$.
		
		In this paper, we focus on the case where $\Omega$ is a thin elliptic shell (see Section \ref{sec2} for the definition). We will perform a rigorous asymptotic analysis as the thickness approaches zero for the model given by \eqref{sec1-eqn1}--\eqref{sec1-eqn3} to identify a linearly two-dimensional elliptic membrane shell theory. Since it is more convenient to describe the solutions to the above problem as minimizers of the functional \eqref{sec1-eqn1}, we exploit the method of $\Gamma$-convergence, which has become very common in dimensional reduction problems in mathematical elasticity. Our result, as far as we are concerned, is the first situation where a two-dimensional linearized magnetoelastic model is derived rigorously for thin curved shells.
		
		The organization of this paper is as follows. In Section \ref{sec2} we present some background and notation. In Section \ref{sec3} we recall the formulation and the properties of a three-dimensional problem for general linearly magnetoelastic shells. In Section \ref{sec4}, when a linearly elliptic membrane shell is taken into account, we rescale the problem presented in the previous section so that it is posed in a domain independent of the thickness parameter. Finally, in Section \ref{sec5}, the $\Gamma$-convergence is performed and we obtain the desired two-dimensional shell model.
		
		\section{Geometrical preliminaries}
		\label{sec2}
		
		For details about the classical notions of differential geometry used in this paper, we refer the reader to \cite{Cia00,Cia05}.
		
		Throughout this paper, the Greek indices (except $\varepsilon$) and exponents range in the set $\{1,2\}$, Latin indices and exponents range in the set $\{1,2,3\}$ (unless they are used for indexing sequences or when otherwise indicated). The summation convention with respect to repeated indices and exponents is systematically used in conjunction with these two rules. 
		
		The notation $\mathbb{E}^3$ designates the three-dimensional Euclidean space; the Euclidean inner product and the vector product of $\boldsymbol{u}, \boldsymbol{v} \in \mathbb{E}^3$ are denoted $\boldsymbol{u} \cdot \boldsymbol{v}$ and $\boldsymbol{u} \times \boldsymbol{v}$; the Euclidean norm of $\boldsymbol{u} \in \mathbb{E}^3$ is denoted $\left|\boldsymbol{u} \right|$. The notation $\delta^j_i$ designates the Kronecker symbol.
		
		Strong and weak convergences in any normed vector space are respectively denoted $\to \textrm{ and }\rightharpoonup.$
		
		Given an open subset $\Omega$ of $\mathbb{R}^n$, notations such as $L^2(\Omega)$, $H^1(\Omega)$, or $H^1_0 (\Omega)$, designate the usual Lebesgue and Sobolev spaces, and the notation $\mathcal{D} (\Omega)$ designates the space of all functions that are infinitely differentiable over $\Omega$ and have compact supports in $\Omega$. The notation $\left\| \cdot \right\|_X$ designates the norm in a normed vector space $X$. The spaces of vector-valued functions are denoted with boldface letters.
		
		A \textit{domain in} $\mathbb{R}^n$ is a bounded and connected open subset $\Omega$ of $\mathbb{R}^n$, whose boundary $\partial \Omega$ is Lipschitz-continuous in the sense of Adams \& Fournier \cite{AdF09}.
		
		Let $\omega$ be a domain in $\mathbb{R}^2$, let $y = (y_\alpha)$ denote a generic point in $\omega$, and let $\partial_\alpha := \partial / \partial y_\alpha$ and $\partial_{\alpha \beta} := \partial^2/\partial y_\alpha \partial y_\beta$. A mapping $\boldsymbol{\theta} \in \mathcal{C}^2(\overline{\omega}; \mathbb{E}^3)$ is an \emph{immersion} if the two vectors
		\[
		\boldsymbol{a}_\alpha (y) := \partial_\alpha \boldsymbol{\theta} (y)
		\]
		are linearly independent at each point $y \in \overline{\omega}$. Then the image $\boldsymbol{\theta} (\overline{\omega})$ of the set $\overline{\omega}$ under the mapping $\boldsymbol{\theta}$ is a \emph{surface in} $\mathbb{E}^3$, equipped with $y_1, y_2$ as its \emph{curvilinear coordinates}. Given any point $y\in \overline{\omega}$, the vectors $\boldsymbol{a}_\alpha (y)$ span the \emph{tangent plane} to the surface $\boldsymbol{\theta} (\overline{\omega})$ at the point $\boldsymbol{\theta} (y)$, the unit vector
		\[
		\boldsymbol{a}_3 (y) := \frac{\boldsymbol{a}_1(y) \wedge \boldsymbol{a}_2 (y)}{|\boldsymbol{a}_1(y) \wedge \boldsymbol{a}_2 (y)|}
		\]
		is normal to $\boldsymbol{\theta} (\overline{\omega})$ at $\boldsymbol{\theta} (y)$, the three vectors $\boldsymbol{a}_i(y)$ form the \emph{covariant} basis at $\boldsymbol{\theta} (y)$, and the three vectors $\boldsymbol{a}^j(y)$ defined by the relations
		\[
		\boldsymbol{a}^j(y) \cdot \boldsymbol{a}_i (y) = \delta^j_i
		\]
		form the \textit{contravariant} basis at $\boldsymbol{\theta} (y)$; note that the vectors $\boldsymbol{a}^\beta (y)$ also span the tangent plane to $\boldsymbol{\theta} (\omega)$ at $\boldsymbol{\theta} (y)$ and that $\boldsymbol{a}^3(y) = \boldsymbol{a}_3 (y)$.
		
		The \textit{first fundamental form} of the surface $\boldsymbol{\theta} (\overline{\omega})$, given as the metric tensor, in covariant or contravariant components, is defined, respectively, by
		\[
		a_{\alpha \beta} := \boldsymbol{a}_\alpha \cdot \boldsymbol{a}_\beta = a_{\beta \alpha} \in \mathcal{C}^1 (\overline{\omega}),
		\]
		and
		\[
		a^{\alpha \beta}:= \boldsymbol{a}^\alpha \cdot \boldsymbol{a}^\beta = a^{\beta \alpha}\in \mathcal{C}^1(\overline{\omega}).
		\]
		Note that the symmetric matrix field $(a^{\alpha \beta})$ is then the inverse of the matrix field $(a_{\alpha \beta})$, that $\boldsymbol{a}^\beta = a^{\alpha \beta}\boldsymbol{a}_\alpha$ and $\boldsymbol{a}_\alpha = a_{\alpha \beta} \boldsymbol{a}^\beta$, and that the \textit{area element} along $\boldsymbol{\theta} (\overline{\omega})$ is given at each point $\boldsymbol{\theta} (y), \, y \in \overline{\omega}$, by $\sqrt{a(y)}\,\mathrm{d} y$, where
		\[
		a := \det (a_{\alpha \beta}) \in \mathcal{C}^1 (\overline{\omega}).  
		\]
		
		Given an immersion $\boldsymbol{\theta} \in \mathcal{C}^2(\overline{\omega}; \mathbb{E}^3)$, the \textit{second fundamental form} of the surface $\boldsymbol{\theta} (\overline{\omega})$ is defined by means of its \textit{covariant components}
		\[
		b_{\alpha \beta}:= \partial_\alpha \boldsymbol{a}_\beta \cdot \boldsymbol{a}_3 = - \boldsymbol{a}_\alpha \cdot \partial_\beta \boldsymbol{a}_3 = b_{\beta \alpha} \in \mathcal{C}^0(\overline{\omega}),
		\]
		or by means of its \textit{mixed components}
		\[
		b^\beta_\alpha := a^{\beta \sigma} b_{\alpha \sigma} \in \mathcal{C}^0(\overline{\omega}),
		\]
		and the \textit{Christoffel symbols} associated with the immersion $\boldsymbol{\theta}$ are defined by
		\[
		\Gamma^\sigma_{\alpha \beta}:= \partial_\alpha \boldsymbol{a}_\beta \cdot \boldsymbol{a}^\sigma = \Gamma^\sigma_{\beta \alpha} \in \mathcal{C}^0 (\overline{\omega}).
		\]
		
		The \textit{Gaussian curvature} at each point $\boldsymbol{\theta} (y) , \, y \in \overline{\omega}$, of the surface $\theta (\overline{\omega})$ is defined by
		\[
		K (y) := \frac{\det (b_{\alpha \beta} (y))}{\det (a_{\alpha \beta} (y))} = \det \left( b^\beta_\alpha (y)\right)
		\]
		(the denominator in the above relation does not vanish since $\boldsymbol{\theta}$ is assumed to be an immersion). Recall that the Gaussian curvature $K (y)$ at the point $\boldsymbol{\theta} (y)$ is also equal to the product of the two principal curvatures at this point.
		
		Given an immersion
		$\boldsymbol{\theta} \in \mathcal{C}^2 (\overline{\omega}; \mathbb{E}^3)$ and a
		vector field $\boldsymbol{\eta} = (\eta_i) \in \mathcal{C}^1(\overline{\omega};
		\mathbb{R}^3)$, the vector field
		\[
		\tilde{\boldsymbol{\eta}} := \eta_i \boldsymbol{a}^i\in \mathcal{C}^1(\overline{\omega};
		\mathbb{E}^3)
		\]
		can be viewed as a \textit{displacement field of the surface} $\boldsymbol{\theta} (\overline{\omega})$, thus defined by means of its \textit{covariant components} $\eta_i$ over the vectors $\boldsymbol{a}^i$ of the contravariant bases along the surface. If the norms $\left\| \eta_i \right\|_{\mathcal{C}^1(\overline{\omega})}$ are small enough, the mapping $(\boldsymbol{\theta} + \eta_i \boldsymbol{a}^i) \in \mathcal{C}^1(\overline{\omega}; \mathbb{E}^3)$ is also an immersion, so that the set $(\boldsymbol{\theta} + \eta_i \boldsymbol{a}^i) (\overline{\omega})$ is also a surface in $\mathbb{E}^3$, equipped with the same curvilinear coordinates as those of the surface $\boldsymbol{\theta} (\overline{\omega})$, called the \textit{deformed surface} corresponding to the displacement field $\tilde{\boldsymbol{\eta}} = \eta_i \boldsymbol{a}^i$. One can then define the first fundamental form of the deformed surface by means of its covariant components
		\begin{align*}
			a_{\alpha \beta} (\boldsymbol{\eta}) :=& (\boldsymbol{a}_\alpha + \partial_\alpha \tilde{\boldsymbol{\eta}}) \cdot (\boldsymbol{a}_\beta + \partial_\beta \tilde{\boldsymbol{\eta}}) \\
			=& a_{\alpha \beta} + \boldsymbol{a}_\alpha \cdot \partial_\beta \tilde{\boldsymbol{\eta}} + \partial_\alpha \tilde{\boldsymbol{\eta}} \cdot \boldsymbol{a}_\beta + \partial_\alpha \tilde{\boldsymbol{\eta}} \cdot \partial_\beta \tilde{\boldsymbol{\eta}}.
		\end{align*}
		
		The \textit{linear part with respect to} $\tilde{\boldsymbol{\eta}}$ in the difference $\dfrac12 (a_{\alpha \beta}(\boldsymbol{\eta}) - a_{\alpha \beta})$ is called the \emph{linearized change of metric}, or \textit{strain}, \emph{tensor} associated with the displacement field $\eta_i \boldsymbol{a}^i$, the covariant components of which are thus defined by
		\[
		\gamma_{\alpha \beta}(\boldsymbol{\eta}) := \dfrac12 \left( \boldsymbol{a}_\alpha \cdot \partial_\beta \tilde{\boldsymbol{\eta}} + \partial_\alpha \tilde{\boldsymbol{\eta}} \cdot \boldsymbol{a}_\beta \right) = \frac12 (\partial_\beta \eta_\alpha + \partial_\alpha \eta_\beta ) - \Gamma^\sigma_{\alpha \beta} \eta_\sigma - b_{\alpha \beta} \eta_3 = \gamma_{\beta \alpha} (\boldsymbol{\eta}).
		\]
		
		Let $\omega$ be a domain in $\mathbb{R}^2$. Then a surface $\boldsymbol{\theta} (\overline{\omega})$ defined by an immersion $\boldsymbol{\theta} \in \mathcal{C}^2(\overline{\omega};\mathbb{E}^3)$ is said to be \textit{elliptic} if its Gaussian curvature is everywhere positive in $\overline{\omega}$, or equivalently, if there exists a constant $K_0$ such that
		\[
		0 < K_0 \leq K (y ) \text{ for all } y \in \overline{\omega}.
		\]
		
		Thanks to the ellipticity, we have the following Korn inequality regarding the elliptic surfaces.
		\begin{thm}[Korn inequality on an elliptic surface \cite{CL96, CP96}]\label{sec2-thm1}
			Let $\omega$ be a domain in $\mathbb{R}^2$ and let an immersion $\boldsymbol{\theta} \in \mathcal{C}^2(\overline{\omega};\mathbb{E}^3)$ be given such that the surface $\boldsymbol{\theta}(\overline{\omega})$ is elliptic. Define the space
			$$
			\boldsymbol{V}_M (\omega) := H^1_0(\omega) \times H^1_0(\omega) \times L^2(\omega).
			$$
			Then there exists a positive constant $c_0$ depending on $\omega$ and $\boldsymbol{\theta}$ such that
			\begin{equation*}
				\left\{ \sum_{\alpha} \|\eta_{\alpha}\|_{H^1(\omega)}^2 + \|\eta_3\|_{L^2(\omega)}^2 \right\}^{1/2} \leq c_0 \left\{ \sum_{\alpha, \beta} \|\gamma_{\alpha \beta}(\boldsymbol{\eta})\|_{L^2(\omega)}^2 \right\}^{1/2}
			\end{equation*}
			for all $\boldsymbol{\eta} =(\eta_i) \in \boldsymbol{V}_M (\omega)$.
		\end{thm}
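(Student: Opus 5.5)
The plan follows the classical route of Ciarlet--Lods and Ciarlet--Sanchez-Palencia: prove the inequality first for smooth fields, where the ellipticity of $(b_{\alpha\beta})$ lets us solve for $\eta_3$, and then extend by density and compactness. Since $\boldsymbol{V}_M(\omega)$ has only $L^2$ regularity in $\eta_3$, the argument has two parts: an a priori estimate for $\eta_\alpha$ in $H^1$ modulo lower-order terms, and an injectivity argument to remove those terms.

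\textbf{Step 1 (algebraic elimination of $\eta_3$).} Ellipticity means the symmetric matrix $(b_{\alpha\beta})$ is definite on $\overline{\omega}$; replacing $\boldsymbol a_3$ by $-\boldsymbol a_3$ if needed, it is positive definite. From the formula
\[
\gamma_{\alpha\beta}(\boldsymbol\eta)=e_{\alpha\beta}(\boldsymbol\eta_H)-\Gamma^\sigma_{\alpha\beta}\eta_\sigma-b_{\alpha\beta}\eta_3,\qquad e_{\alpha\beta}:=\tfrac12(\partial_\beta\eta_\alpha+\partial_\alpha\eta_\beta),
\]
we contract with a suitable continuous tensor, for instance $b^{\alpha\beta}$, the inverse of $(b_{\alpha\beta})$. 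This expresses $\eta_3$ pointwise as a combination of $e_{\alpha\beta}(\boldsymbol\eta_H)$, $\eta_\sigma$ and $\gamma_{\alpha\beta}(\boldsymbol\eta)$. Substituting back yields a first-order system for $\boldsymbol\eta_H=(\eta_1,\eta_2)$ alone:
\[
e_{\alpha\beta}(\boldsymbol\eta_H)-\tfrac12 b_{\alpha\beta}b^{\sigma\tau}e_{\sigma\tau}(\boldsymbol\eta_H)=F_{\alpha\beta},
\]
where $F_{\alpha\beta}$ is controlled by $\|\gamma(\boldsymbol\eta)\|_{L^2}+\|\boldsymbol\eta_H\|_{L^2}$. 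In other words, the trace-free part with respect to the metric induced by $b$ is controlled.

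\textbf{Step 2 (ellipticity of the first-order system).} The operator $\boldsymbol\eta_H\mapsto$ (trace-free part of $e(\boldsymbol\eta_H)$ relative to a positive definite metric) is a Cauchy--Riemann/conformal-Killing type operator. It is elliptic precisely because the metric $b$ is definite; this is the point where $K_0>0$ enters. For hyperbolic surfaces it would fail. For $\boldsymbol\eta_H\in H^1_0$, an integration-by-parts identity, or Gårding's inequality for this elliptic system with Dirichlet conditions (the Lopatinskii condition holds for full Dirichlet data), gives
\[
\|\boldsymbol\eta_H\|_{H^1}\le C\bigl(\|\gamma(\boldsymbol\eta)\|_{L^2}+\|\boldsymbol\eta_H\|_{L^2}\bigr).
\]
The variable coefficients are only $\mathcal C^0$, so we freeze them locally and use a partition of unity. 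I expect this step to be the main obstacle. I would first try the approach of \cite{CL96}, which expresses the system through the $H^{-1}$ Lions lemma ($\|v\|_{L^2}\le C(\|v\|_{H^{-1}}+\|\nabla v\|_{H^{-1}})$), applied after differentiating the relations. The estimate for $\eta_3$ then follows from Step 1:
\[
\|\eta_3\|_{L^2}\le C\bigl(\|\gamma(\boldsymbol\eta)\|_{L^2}+\|\boldsymbol\eta_H\|_{H^1}\bigr).
\]

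\textbf{Step 3 (removing the lower-order term).} We argue by contradiction. Suppose $\boldsymbol\eta^k\in\boldsymbol V_M(\omega)$ satisfy $\|\boldsymbol\eta^k\|=1$ while $\gamma(\boldsymbol\eta^k)\to0$. By Rellich's theorem, $\boldsymbol\eta^k_H\to\boldsymbol\eta_H$ in $L^2$. The estimate of Step 2 then makes $(\boldsymbol\eta^k_H)$ Cauchy in $H^1$, and Step 1 makes $(\eta^k_3)$ Cauchy in $L^2$. The limit $\boldsymbol\eta$ therefore has norm $1$ and satisfies $\gamma(\boldsymbol\eta)=0$.

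It remains to prove infinitesimal rigidity: $\gamma(\boldsymbol\eta)=0$ with $\eta_\alpha\in H^1_0$ forces $\boldsymbol\eta=0$. The displacement field $\tilde{\boldsymbol\eta}$ is then an infinitesimal isometric bending. After eliminating $\eta_3$, $\boldsymbol\eta_H$ solves the homogeneous elliptic system of Step 2 and vanishes on $\partial\omega$. Unique continuation for this elliptic system, or a direct argument using a Liouville-type identity for the rotation field $\tilde{\boldsymbol\eta}$, gives $\boldsymbol\eta_H=0$, and then $\eta_3=0$ by Step 1. This contradicts $\|\boldsymbol\eta\|=1$ and proves the inequality.
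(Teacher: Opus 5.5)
The paper does not prove this statement. It is quoted from the works of Ciarlet--Lods and Ciarlet--Sanchez-Palencia, so there is no in-paper argument to compare against. Your outline is a correct reconstruction of the standard strategy behind that result: the membrane operator is elliptic exactly when $K>0$, one proves an estimate modulo a compact term, infinitesimal rigidity of a surface clamped along all of $\partial\omega$ comes from uniqueness for the Cauchy problem, and Rellich plus contradiction removes the lower-order term. Eliminating $\eta_3$ first through the inverse of $(b_{\alpha\beta})$ is a worthwhile simplification. It reduces everything to a determined $2\times 2$ first-order system of Cauchy--Riemann type for $\boldsymbol\eta_H$, namely $P_b\boldsymbol\eta_H=F$ with $P_b$ the $b$-trace-free part of the symmetrized gradient. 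You avoid a Douglis--Nirenberg system in three unknowns, and the a priori estimate needs only continuous coefficients. The price is that the real weight moves to a planar unique continuation theorem.

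Two precisions are needed. In Step 2 you overestimate the difficulty, and both of your justifications are slightly off. Since $\boldsymbol\eta_H\in H^1_0(\omega)$, extend it by zero. For frozen definite $b$, Plancherel and injectivity of the symbol give $\|\nabla\boldsymbol u\|_{L^2}\le C\|P_b\boldsymbol u\|_{L^2}$ on $H^1(\mathbb R^2)$, and a partition of unity handles continuous $b$. No Lopatinskii condition is involved. In fact this first-order system takes only one boundary condition, so full Dirichlet data is overdetermined and you only use injectivity. The Lions-lemma route would differentiate relations whose coefficients $b_{\alpha\beta}$ and $\Gamma^\sigma_{\alpha\beta}$ are merely $\mathcal C^0$ under the stated hypothesis $\boldsymbol\theta\in\mathcal C^2$, so that route needs $\mathcal C^3$.

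In Step 3, spell out the rigidity argument. Extend $\boldsymbol\eta_H$ by zero to a connected neighbourhood $U$ of $\overline\omega$, with $b$ and $\Gamma^\sigma_{\alpha\beta}$ extended continuously and $b$ still definite. This extension belongs to $H^1(U)$, because weak derivatives of the zero extension of an $H^1_0$ function are the zero extensions of its derivatives. Hence it solves the same homogeneous system in $U$ and vanishes on the open set $U\setminus\overline\omega$. Unique continuation for planar first-order elliptic $2\times 2$ systems then gives $\boldsymbol\eta_H=0$, and $\eta_3=0$ follows from Step 1. With only continuous leading coefficients you need the low-regularity planar theory, namely the Bers--Vekua--Bojarski similarity principle via quasiconformal changes of variables, rather than Carleman's smooth-coefficient theorem. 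With $\boldsymbol\theta\in\mathcal C^3$, as the paper assumes from Section 3 on, the classical result suffices. The alternative via a ``Liouville-type identity for the rotation field'' is not an argument as stated and can be dropped.
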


		\section{The 3D problem in linear magnetoelasticity}
		\label{sec3}
		
		Let $\omega \subset \mathbb{R}^2$ be a domain with $\gamma:=\partial\omega$, and let $\gamma_0$ be a nonempty relatively open subset of $\gamma$. For each $\varepsilon>0$, define
		\[
		\Omega^\varepsilon:=\omega\times(-\varepsilon,\varepsilon),
		\qquad
		\Gamma_0^\varepsilon:=\gamma_0\times[-\varepsilon,\varepsilon].
		\]
		We let $x^\varepsilon=(x_i^\varepsilon)=(y,x_3^\varepsilon)$ designate a
		generic point of $\overline{\Omega^\varepsilon}$, and let
		$\partial_i^\varepsilon:=\partial/\partial x_i^\varepsilon$. Hence we also have
		$x_\alpha^\varepsilon=y_\alpha$ and $\partial_\alpha^\varepsilon=\partial_\alpha$.
		
		Given an immersion
		$\boldsymbol\theta\in\mathcal C^3(\overline\omega;\mathbb E^3)$, consider
		a shell with middle surface $\boldsymbol\theta(\overline\omega)$ and
		constant thickness $2\varepsilon$. Its reference configuration is the set
		\[
		\widehat\Omega^\varepsilon
		:=\boldsymbol\Theta(\overline{\Omega^\varepsilon}),
		\]
		where
		\begin{equation*}
			\hat{x}^\varepsilon = \boldsymbol\Theta(x^\varepsilon)
			:=\boldsymbol\theta(y)+x_3^\varepsilon\boldsymbol a_3(y),
			\qquad x^\varepsilon=(y,x_3^\varepsilon)\in\overline{\Omega^\varepsilon}.
		\end{equation*}
		The half-thickness $\varepsilon$ is assumed to be small enough such that
		$\boldsymbol\Theta$ is a $\mathcal C^2$-diffeomorphism onto its image (see, for instance, \cite[Theorem 3.1-1]{Cia00}). Thus, the three vectors
		\[
		\boldsymbol g_i^\varepsilon(x^\varepsilon)
		:=\partial_i^\varepsilon\boldsymbol\Theta(x^\varepsilon)
		\]
		form the covariant basis at $\boldsymbol\Theta(x^\varepsilon)$, while the
		vectors $\boldsymbol g^{j,\varepsilon}$ defined by
		\[
		\boldsymbol g^{j,\varepsilon}\cdot\boldsymbol g_i^\varepsilon
		=\delta_i^j
		\]
		form the corresponding contravariant basis.
		
		The covariant and contravariant components of the metric tensor associated
		with $\boldsymbol\Theta$ and its Christoffel symbols are
		respectively defined by
		\begin{align*}
			g_{ij}^\varepsilon :=
			\boldsymbol g_i^\varepsilon\cdot\boldsymbol g_j^\varepsilon \in \mathcal C^2(\overline{\Omega^\varepsilon}),
			\qquad
			g^{ij,\varepsilon}
			:=
			\boldsymbol g^{i,\varepsilon}\cdot\boldsymbol g^{j,\varepsilon}\in \mathcal C^2(\overline{\Omega^\varepsilon}),
		\end{align*}
		and
		\begin{align*}
			\Gamma_{ij}^{p,\varepsilon}
			&:=
			\partial_i^\varepsilon\boldsymbol g_j^\varepsilon
			\cdot\boldsymbol g^{p,\varepsilon}
			=\Gamma_{ji}^{p,\varepsilon} \in \mathcal C^1(\overline{\Omega^\varepsilon}).
		\end{align*}
		Note that $\Gamma^{3,\varepsilon}_{\alpha3} = \Gamma^{p,\varepsilon}_{33} = 0$. 
		
		The volume element in $\boldsymbol{\Theta}(\overline{\Omega^\varepsilon})$ is given at each point $\boldsymbol{\Theta}(x^\varepsilon)$, $x^\varepsilon \in \overline{\Omega^\varepsilon}$, by $\sqrt{g^\varepsilon(x^\varepsilon)}\,\mathrm{d} x^\varepsilon$, where
		$$
		g^\varepsilon := \operatorname{det}(g^\varepsilon_{ij}) \in \mathcal{C}^2(\overline{\Omega^\varepsilon}).
		$$
		
		Given $\boldsymbol v^\varepsilon=(v_i^\varepsilon)\in
		(H^1(\Omega^\varepsilon))^3$, the vector field
		\[
		\widetilde{\boldsymbol v}^{\,\varepsilon}
		:=v_i^\varepsilon\boldsymbol g^{i,\varepsilon}
		\]
		is the associated displacement field of the reference configuration,
		expressed through its covariant components. The covariant components of
		the corresponding linearized strain tensor are
		\begin{equation*}
			\epsilon_{i\|j}^\varepsilon(\boldsymbol v^\varepsilon)
			:=
			\frac12\big(
			\partial_j^\varepsilon v_i^\varepsilon+
			\partial_i^\varepsilon v_j^\varepsilon\big)
			-\Gamma_{ij}^{p,\varepsilon}v_p^\varepsilon
			=
			\epsilon_{j\|i}^\varepsilon(\boldsymbol v^\varepsilon).
		\end{equation*}
		
		We assume throughout this paper that, for each $\varepsilon>0$, the reference configuration $\boldsymbol{\Theta}(\overline{\Omega^\varepsilon})$ of the shell is a natural state (i.e., stress-free) and is made of a homogeneous, isotropic, linearly magnetoelastic material. Its behavior is thus modeled by Lam\'e constants $\lambda\geq0$ and $\mu>0$, which are independent of both
		$x^\varepsilon$ and $\varepsilon$. The contravariant components of the
		elasticity tensor are then defined as
		\begin{equation*}
			A^{ijk\ell,\varepsilon}
			:=
			\lambda g^{ij,\varepsilon}g^{k\ell,\varepsilon}
			+\mu\big(
			g^{ik,\varepsilon}g^{j\ell,\varepsilon}
			+
			g^{i\ell,\varepsilon}g^{jk,\varepsilon}\big).
		\end{equation*}
		
		Let $\boldsymbol m^\varepsilon=(m_i^\varepsilon)\in
		(H^1(\Omega^\varepsilon))^3$, and define
		\begin{equation*}
			\widetilde{\boldsymbol m}^{\,\varepsilon}
			:=m_i^\varepsilon\boldsymbol g^{i,\varepsilon},
			\qquad
			\widehat{\boldsymbol m}^{\,\varepsilon}
			:=\widetilde{\boldsymbol m}^{\,\varepsilon}
			\circ\boldsymbol\Theta^{-1}
			\quad\text{in }\widehat\Omega^\varepsilon.
		\end{equation*}
		The saturation constraint and the covariant derivatives of the
		magnetization take the form
		\begin{align*}
			&g^{ij,\varepsilon}m_i^\varepsilon m_j^\varepsilon=1
			\quad\text{a.e. in }\Omega^\varepsilon,
		\end{align*}
		and
		\begin{align*}
			&m_{p\|i}^\varepsilon
			:=
			\partial_i^\varepsilon m_p^\varepsilon
			-\Gamma_{ip}^{q,\varepsilon}m_q^\varepsilon.
		\end{align*}
		Consequently,
		\begin{equation*}
			\int_{\widehat\Omega^\varepsilon}
			|\nabla\widehat{\boldsymbol m}^{\,\varepsilon}|^2\,\mathrm{d} \hat{x}^\varepsilon
			=
			\int_{\Omega^\varepsilon}
			g^{ij,\varepsilon}g^{pq,\varepsilon}
			m_{p\|i}^\varepsilon m_{q\|j}^\varepsilon
			\sqrt{g^\varepsilon}\,\mathrm{d} x^\varepsilon.
		\end{equation*}
		
		We denote the covariant components of the magnetic strain contribution by
		\begin{equation*}
			e_{ij}^\varepsilon(\boldsymbol m^\varepsilon)
			:=
			-m_i^\varepsilon m_j^\varepsilon+\frac13g_{ij}^\varepsilon.
		\end{equation*}
		The total elastic strain entering the elastic energy is therefore
		\begin{equation*}
			E_{i\|j}^\varepsilon
			(\boldsymbol v^\varepsilon,\boldsymbol m^\varepsilon)
			:=
			\epsilon_{i\|j}^\varepsilon(\boldsymbol v^\varepsilon)
			+e_{ij}^\varepsilon(\boldsymbol m^\varepsilon).
		\end{equation*}
		This is the curvilinear representation of
		$\boldsymbol\epsilon(\boldsymbol v)+\boldsymbol e(\boldsymbol m)$ in
		\eqref{sec1-eqn1}.
		
		Let
		\[
		L^{1,2}(\mathbb R^3)
		:=
		\big\{\varphi\in L^2_{\mathrm{loc}}(\mathbb R^3):
		\nabla\varphi\in\boldsymbol L^2(\mathbb R^3)\big\}.
		\]
		The magnetostatic potential associated with
		$\boldsymbol m^\varepsilon$ is the function
		$v_{\boldsymbol m^\varepsilon}^\varepsilon\in L^{1,2}(\mathbb R^3)$,
		unique up to an additive constant, such that
		\begin{equation*}
			\mu_0\int_{\mathbb R^3}
			\nabla v_{\boldsymbol m^\varepsilon}^\varepsilon
			\cdot\nabla\varphi\,\mathrm{d} \xi
			=
			\int_{\widehat\Omega^\varepsilon}
			\widehat{\boldsymbol m}^{\,\varepsilon}
			\cdot\nabla\varphi\,\mathrm{d} \xi
			\quad\text{for all }\varphi\in L^{1,2}(\mathbb R^3).
		\end{equation*}
		Equivalently,
		\[
		\operatorname{div}\big(
		-\mu_0\nabla v_{\boldsymbol m^\varepsilon}^\varepsilon
		+\chi_{\widehat\Omega^\varepsilon}
		\widehat{\boldsymbol m}^{\,\varepsilon}\big)=0
		\quad\text{in }\mathcal D'(\mathbb R^3).
		\]
		
		We assume that the shell is clamped along
		$\Gamma_0^\varepsilon$, and that it is subjected to applied body forces and magnetic loadings with contravariant components, respectively,
		$f^{i,\varepsilon}\in L^2(\Omega^\varepsilon)$ and $h^{i,\varepsilon}\in L^2(\Omega^\varepsilon)$. Define
		\begin{align*}
			\boldsymbol V(\Omega^\varepsilon)
			&:=
			\big\{\boldsymbol v^\varepsilon=(v_i^\varepsilon)
			\in (H^1(\Omega^\varepsilon))^3:
			\boldsymbol v^\varepsilon=\boldsymbol0
			\text{ on }\Gamma_0^\varepsilon\big\},\\
			\boldsymbol M(\Omega^\varepsilon)
			&:=
			\big\{\boldsymbol m^\varepsilon=(m_i^\varepsilon)
			\in(H^1(\Omega^\varepsilon))^3:
			g^{ij,\varepsilon}m_i^\varepsilon m_j^\varepsilon=1
			\text{ a.e. in }\Omega^\varepsilon\big\},\\
			\boldsymbol{\mathcal A}(\Omega^\varepsilon)
			&:=
			\boldsymbol V(\Omega^\varepsilon)
			\times\boldsymbol M(\Omega^\varepsilon).
		\end{align*}
		For every
		$(\boldsymbol v^\varepsilon,\boldsymbol m^\varepsilon)
		\in\boldsymbol{\mathcal A}(\Omega^\varepsilon)$, let
		\begin{align*}
			\mathcal G^\varepsilon
			(\boldsymbol v^\varepsilon,\boldsymbol m^\varepsilon)
			:={}&
			\frac12\int_{\Omega^\varepsilon}
			A^{ijk\ell,\varepsilon}
			E_{k\|\ell}^\varepsilon
			(\boldsymbol v^\varepsilon,\boldsymbol m^\varepsilon)
			E_{i\|j}^\varepsilon
			(\boldsymbol v^\varepsilon,\boldsymbol m^\varepsilon)
			\sqrt{g^\varepsilon}\,\mathrm{d} x^\varepsilon\\
			&+
			\frac12\int_{\Omega^\varepsilon}
			g^{ij,\varepsilon}g^{pq,\varepsilon}
			m_{p\|i}^\varepsilon m_{q\|j}^\varepsilon
			\sqrt{g^\varepsilon}\,\mathrm{d} x^\varepsilon+
			\frac{\mu_0}{2}\int_{\mathbb R^3}
			|\nabla v_{\boldsymbol m^\varepsilon}^\varepsilon|^2\,\mathrm{d} \xi \\
			&-
			\int_{\Omega^\varepsilon}
			f^{i,\varepsilon}v_i^\varepsilon
			\sqrt{g^\varepsilon}\,\mathrm{d} x^\varepsilon - 
			\int_{\Omega^\varepsilon}
			h^{i,\varepsilon}m_i^\varepsilon
			\sqrt{g^\varepsilon}\,\mathrm{d} x^\varepsilon.
		\end{align*}
		
		We are now in a position to formulate the three-dimensional problem.
		
		\begin{prbs}
			\label{problem3D}
			Find
			$(\boldsymbol u^\varepsilon,\boldsymbol m^\varepsilon)
			\in\boldsymbol{\mathcal A}(\Omega^\varepsilon)$ such that
			\begin{equation*}
				\mathcal G^\varepsilon
				(\boldsymbol u^\varepsilon,\boldsymbol m^\varepsilon)
				=
				\inf_{(\boldsymbol v^\varepsilon,\boldsymbol n^\varepsilon)
					\in\boldsymbol{\mathcal A}(\Omega^\varepsilon)}
				\mathcal G^\varepsilon
				(\boldsymbol v^\varepsilon,\boldsymbol n^\varepsilon).
			\end{equation*}
		\end{prbs}
		
		The following existence result is a direct consequence of the $\Gamma$-convergence result in \cite{AKM25}.
		\begin{thm}
			\label{sec3-thm1}
			The minimization problem \hyperref[problem3D]{$\mathcal{P}(\Omega^\varepsilon)$} has at least one solution.
		\end{thm}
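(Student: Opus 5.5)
The plan is to prove existence by the direct method of the calculus of variations. This is the standard mechanism behind the existence statement in \cite{AKM25}: existence of minimizers of the limiting linearized functional follows from its coercivity and lower semicontinuity, and the $\Gamma$-limit is automatically lower semicontinuous. Since $\boldsymbol\Theta$ is a $\mathcal C^2$-diffeomorphism, $\mathcal G^\varepsilon$ is just the functional \eqref{sec1-eqn1} written in curvilinear coordinates on the Lipschitz domain $\widehat\Omega^\varepsilon$. So I may equivalently argue either on $\widehat\Omega^\varepsilon$ with Cartesian fields or directly on $\Omega^\varepsilon$. I will argue on $\Omega^\varepsilon$, using that $g^{ij,\varepsilon}$, $g^\varepsilon$ and $\Gamma^{p,\varepsilon}_{ij}$ are continuous on the compact set $\overline{\Omega^\varepsilon}$ and that $(g^{ij,\varepsilon})$ is uniformly positive definite there.

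\emph{Step 1: the admissible set is nonempty and the infimum is finite.} Take $\boldsymbol v^\varepsilon=\boldsymbol 0$ and $m_i^\varepsilon:=\boldsymbol g_i^\varepsilon\cdot\boldsymbol e/|\boldsymbol g^{3,\varepsilon}|$-type normalizations. More simply, take $\widetilde{\boldsymbol m}^{\,\varepsilon}=\boldsymbol g^{3,\varepsilon}/|\boldsymbol g^{3,\varepsilon}|$, which is $\mathcal C^1$, so its covariant components lie in $H^1$ and satisfy the saturation constraint. Hence $\inf\mathcal G^\varepsilon<+\infty$.

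\emph{Step 2: coercivity.} The constraint gives $|\widetilde{\boldsymbol m}^{\,\varepsilon}|=1$, so $m_i^\varepsilon$ is bounded in $L^\infty$ uniformly. Consequently $e_{ij}^\varepsilon(\boldsymbol m^\varepsilon)$ is bounded in $L^\infty$, and $\int h^{i,\varepsilon}m_i^\varepsilon\sqrt{g^\varepsilon}$ is bounded. The magnetostatic term is nonnegative. The positivity of $A^{ijk\ell,\varepsilon}$ and $(a+b)^2\ge \frac12a^2-b^2$ give
\[
\mathcal G^\varepsilon(\boldsymbol v^\varepsilon,\boldsymbol m^\varepsilon)\ge c\sum_{i,j}\|\epsilon^\varepsilon_{i\|j}(\boldsymbol v^\varepsilon)\|^2_{L^2}+c\sum_{p,i}\|m^\varepsilon_{p\|i}\|_{L^2}^2-C-C\|\boldsymbol v^\varepsilon\|_{H^1}.
\]
The three-dimensional Korn inequality in curvilinear coordinates with clamping on $\Gamma_0^\varepsilon$ (\cite[Theorem 1.7-4]{Cia00}) bounds $\|\boldsymbol v^\varepsilon\|_{H^1}$ by $\|\epsilon^\varepsilon_{i\|j}(\boldsymbol v^\varepsilon)\|_{L^2}$. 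In addition, $\|\partial_i m_p^\varepsilon\|_{L^2}\le \|m^\varepsilon_{p\|i}\|_{L^2}+C$. Therefore minimizing sequences are bounded in $\boldsymbol V(\Omega^\varepsilon)\times (H^1(\Omega^\varepsilon))^3$.

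\emph{Step 3: compactness and lower semicontinuity.} Extract $\boldsymbol v_n\rightharpoonup\boldsymbol u^\varepsilon$ in $H^1$ and $\boldsymbol m_n\rightharpoonup\boldsymbol m^\varepsilon$ in $H^1$. By Rellich, $\boldsymbol m_n\to\boldsymbol m^\varepsilon$ in $L^2$ and a.e. along a subsequence, so the saturation constraint passes to the limit and $\boldsymbol m^\varepsilon\in\boldsymbol M(\Omega^\varepsilon)$. The space $\boldsymbol V(\Omega^\varepsilon)$ is weakly closed. Since the $m_i$ are bounded in $L^\infty$ and converge a.e., dominated convergence gives $e_{ij}(\boldsymbol m_n)\to e_{ij}(\boldsymbol m^\varepsilon)$ strongly in $L^2$. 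Hence $E_{i\|j}(\boldsymbol v_n,\boldsymbol m_n)\rightharpoonup E_{i\|j}(\boldsymbol u^\varepsilon,\boldsymbol m^\varepsilon)$ weakly in $L^2$, and the elastic term is weakly l.s.c., being a convex continuous quadratic form. The exchange term is likewise l.s.c., since $m_{p\|i}$ converges weakly. The linear load terms converge.

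\emph{Step 4: the magnetostatic term.} This is the only nonlocal piece and, I expect, the main point to check. The map $\widehat{\boldsymbol m}\mapsto\nabla v_{\boldsymbol m}$ is linear and bounded from $L^2(\widehat\Omega^\varepsilon)$ to $L^2(\mathbb R^3)$: test with $\varphi=v_{\boldsymbol m}$ to get $\mu_0\|\nabla v\|^2\le\|\widehat{\boldsymbol m}\|\,\|\nabla v\|$. Since $\widehat{\boldsymbol m}_n\to\widehat{\boldsymbol m}^{\,\varepsilon}$ strongly in $L^2(\widehat\Omega^\varepsilon)$ by change of variables through $\boldsymbol\Theta$, it follows that $\nabla v_{\boldsymbol m_n}\to\nabla v_{\boldsymbol m^\varepsilon}$ strongly, so this term is even continuous. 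Combining Steps 3 and 4, $(\boldsymbol u^\varepsilon,\boldsymbol m^\varepsilon)$ attains the infimum.
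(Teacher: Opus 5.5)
Your proof is correct and is the direct-method argument that the paper only alludes to: the paper gives no details, attributing the result to the $\Gamma$-convergence theorem of \cite{AKM25} (a $\Gamma$-limit is lower semicontinuous, so coercivity suffices) and remarking that the direct method works. You supply the omitted checks, namely Korn's inequality in curvilinear coordinates with clamping on $\Gamma_0^\varepsilon$, weak closedness of the saturation constraint via Rellich, and continuity of the linear map $\widehat{\boldsymbol m}\mapsto\nabla v_{\boldsymbol m}$ from $L^2$ to $L^2$. In Step~1, drop the garbled first choice: since $\boldsymbol g^{3,\varepsilon}=\boldsymbol a_3$ is already a unit vector, your competitor is simply $\boldsymbol m^\varepsilon=(0,0,1)$.
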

		
		The above existence result can be straightforwardly proved by the direct method. However, it should be noted that due to the nonlinearity of the magnetization, the uniqueness of solutions is not expected.

		\section{The scaled 3D problem for a family of linearly elliptic membrane shells}
		\label{sec4}
		
		In Section \ref{sec3}, we considered a general linearly magnetoelastic
		shell. From now on, we restrict ourselves to linearly elliptic membrane
		shells. This means that $\gamma_0=\gamma=\partial\omega$, so the homogeneous boundary condition is imposed on the entire lateral face, and that the middle surface $\boldsymbol\theta(\overline\omega)$ is elliptic in the sense of Section \ref{sec2}. We consider a family of such shells sharing the same middle
		surface, whose thickness $2\varepsilon$ approaches zero.
		
		To effectively perform the asymptotic analysis, we need to specify the \textit{assumptions on the data}. Following the scaling used for elliptic membrane shells in
		\cite[Section 4]{Pie22}, define the fixed domain
		\[
		\Omega:=\omega\times(-1,1),
		\]
		let $x=(x_i)=(y,x_3)$ denote a generic point of $\Omega$, and associate
		with it the point
		\begin{equation*}
			x^\varepsilon 
			:=(y,\varepsilon x_3)\in\Omega^\varepsilon.
		\end{equation*}
		Hence
		\[
		\partial_\alpha^\varepsilon=\partial_\alpha,
		\qquad
		\partial_3^\varepsilon=\frac1\varepsilon\partial_3.
		\]
		
		To the unknown
		$\boldsymbol u^\varepsilon=(u_i^\varepsilon)$, the field
		$\boldsymbol v^\varepsilon=(v_i^\varepsilon)$, and
		$\boldsymbol m^\varepsilon=(m_i^\varepsilon)$ on
		$\Omega^\varepsilon$, we associate the scaled fields on $\Omega$ by
		\begin{equation*}
			u_i(\varepsilon)(x):=u_i^\varepsilon(x^\varepsilon),
			\,
			v_i(x):=v_i^\varepsilon(x^\varepsilon),
			\, \textrm{ and }
			m_i(\varepsilon)(x):=m_i^\varepsilon(x^\varepsilon).
		\end{equation*}
		As in \cite{Pie22}, we assume that there exist functions
		$f^i,h^i\in L^2(\Omega)$, independent of $\varepsilon$, such that
		\begin{equation*}
			f^{i,\varepsilon}(x^\varepsilon)=f^i(x),
			\qquad
			h^{i,\varepsilon}(x^\varepsilon)=h^i(x)
			\quad\text{for a.e. }x\in\Omega.
		\end{equation*}
		
		In view of the proposed scaling, we define
		\[
		\boldsymbol g_i(\varepsilon)(x)
		:=
		\boldsymbol g_i^\varepsilon(x^\varepsilon),
		\qquad
		\boldsymbol g^i(\varepsilon)(x)
		:=
		\boldsymbol g^{i,\varepsilon}(x^\varepsilon),
		\]
		and
		\[
		g_{ij}(\varepsilon)(x)
		:=
		g_{ij}^\varepsilon(x^\varepsilon),
		\qquad
		g^{ij}(\varepsilon)(x)
		:=
		g^{ij,\varepsilon}(x^\varepsilon),
		\qquad
		g(\varepsilon)(x)
		:=
		g^\varepsilon(x^\varepsilon),
		\]
		for each \(x\in\overline{\Omega}\). We also let
		\[
		\Gamma_{ij}^{p}(\varepsilon)(x)
		:=
		\Gamma_{ij}^{p,\varepsilon}(x^\varepsilon),
		\qquad
		A^{ijk\ell}(\varepsilon)(x)
		:=
		A^{ijk\ell,\varepsilon}(x^\varepsilon),
		\quad x\in\overline{\Omega}.
		\]
		
		The scaled mechanical linearized strains in curvilinear coordinates are
		\begin{align*}
			\epsilon_{\alpha\|\beta}(\varepsilon;\boldsymbol v)
			&:=
			\frac12(\partial_\beta v_\alpha+\partial_\alpha v_\beta)
			-\Gamma_{\alpha\beta}^p(\varepsilon)v_p,\\
			\epsilon_{\alpha\|3}(\varepsilon;\boldsymbol v)
			=
			\epsilon_{3\|\alpha}(\varepsilon;\boldsymbol v)
			&:=
			\frac12\left(
			\frac1\varepsilon\partial_3v_\alpha+\partial_\alpha v_3\right)
			-\Gamma_{\alpha3}^{\sigma}(\varepsilon)v_\sigma,\\
			\epsilon_{3\|3}(\varepsilon;\boldsymbol v)
			&:=
			\frac1\varepsilon\partial_3v_3.
		\end{align*}
		Likewise, for a scaled magnetization
		$\boldsymbol m=(m_i)$, define
		\begin{align*}
			m_{p\|\alpha}(\varepsilon;\boldsymbol m)
			&:=
			\partial_\alpha m_p
			-\Gamma_{\alpha p}^{q}(\varepsilon)m_q,\\
			m_{p\|3}(\varepsilon;\boldsymbol m)
			&:=
			\frac1\varepsilon\partial_3m_p
			-\Gamma_{3p}^{q}(\varepsilon)m_q.
		\end{align*}
		The scaled magnetic strain contribution and the total elastic strain are,
		respectively,
		\begin{align*}
			e_{ij}(\varepsilon;\boldsymbol m)
			&:=
			-m_i m_j+\frac13g_{ij}(\varepsilon),\\
			E_{i\|j}
			(\varepsilon;\boldsymbol v,\boldsymbol m)
			&:=
			\epsilon_{i\|j}(\varepsilon;\boldsymbol v)
			+e_{ij}(\varepsilon;\boldsymbol m).
		\end{align*}
		
		Define
		\begin{align*}
			\boldsymbol V(\Omega)
			&:=
			\big\{\boldsymbol v=(v_i)\in(H^1(\Omega))^3:
			\boldsymbol v=\boldsymbol0
			\text{ on }\gamma\times[-1,1]\big\},\\
			\boldsymbol M(\varepsilon;\Omega)
			&:=
			\big\{\boldsymbol m=(m_i)\in(H^1(\Omega))^3:
			g^{ij}(\varepsilon)m_i m_j=1
			\text{ a.e. in }\Omega\big\},\\
			\boldsymbol{\mathcal A}(\varepsilon;\Omega)
			&:=
			\boldsymbol V(\Omega)\times
			\boldsymbol M(\varepsilon;\Omega).
		\end{align*}
		
		Given $\boldsymbol m\in\boldsymbol M(\varepsilon;\Omega)$, let
		$\widehat{\boldsymbol m}_\varepsilon$ be its representation on the
		reference configuration of the shell, defined by
		\begin{equation}
			\label{sec4-eqn0}
			\widehat{\boldsymbol m}_\varepsilon
			\big(\boldsymbol\Theta(\varepsilon)(x)\big)
			:=
			m_i(x)\boldsymbol g^i(\varepsilon)(x).
		\end{equation}
		Its magnetostatic potential
		$v_{\varepsilon,\boldsymbol m}\in L^{1,2}(\mathbb R^3)$ is characterized
		by
		\begin{align}
			\label{sec4-eqn1}
			\mu_0\int_{\mathbb R^3}
			\nabla v_{\varepsilon,\boldsymbol m}\cdot\nabla\varphi\,\mathrm{d} \xi
			&=
			\int_{\widehat\Omega^\varepsilon}
			\widehat{\boldsymbol m}_\varepsilon\cdot\nabla\varphi\,\mathrm{d} \xi \nonumber\\
			&=
			\varepsilon\int_\Omega
			m_i\boldsymbol g^i(\varepsilon)\cdot
			\big(\nabla\varphi\circ\boldsymbol\Theta(\varepsilon)\big)
			\sqrt{g(\varepsilon)}\,\mathrm{d} x
		\end{align}
		for all $\varphi\in L^{1,2}(\mathbb R^3)$.
		
		We normalize the total energy by the thickness scale and define
		\begin{align*}
			\mathcal G(\varepsilon;
			\boldsymbol v,\boldsymbol m)
			:={}&
			\frac1\varepsilon
			\mathcal G^\varepsilon
			(\boldsymbol v^\varepsilon,\boldsymbol m^\varepsilon)\\
			={}&
			\frac12\int_\Omega
			A^{ijk\ell}(\varepsilon)
			E_{k\|\ell}(\varepsilon;\boldsymbol v,\boldsymbol m)
			E_{i\|j}(\varepsilon;\boldsymbol v,\boldsymbol m)
			\sqrt{g(\varepsilon)}\,\mathrm{d} x\\
			&+
			\frac12\int_\Omega
			g^{ij}(\varepsilon)g^{pq}(\varepsilon)
			m_{p\|i}(\varepsilon;\boldsymbol m)
			m_{q\|j}(\varepsilon;\boldsymbol m)
			\sqrt{g(\varepsilon)}\,\mathrm{d} x\\
			&+
			\frac{\mu_0}{2\varepsilon}\int_{\mathbb R^3}
			|\nabla v_{\varepsilon,\boldsymbol m}|^2\,\mathrm{d} \xi
			-
			\int_\Omega f^iv_i\sqrt{g(\varepsilon)}\,\mathrm{d} x
			-
			\int_\Omega h^im_i\sqrt{g(\varepsilon)}\,\mathrm{d} x.
		\end{align*}

		We are now in a position to introduce the scaled problem of \hyperref[problem3D]{$\mathcal{P}(\Omega^\varepsilon)$}.
		\begin{prbse}
			\label{problem-scaled}
			Find
			$(\boldsymbol u(\varepsilon),\boldsymbol m(\varepsilon))
			\in\boldsymbol{\mathcal A}(\varepsilon;\Omega)$ such that
			\begin{align*}
				\mathcal G\big(
				\varepsilon;\boldsymbol u(\varepsilon),
				\boldsymbol m(\varepsilon)\big)
				=
				\inf_{(\boldsymbol v,\boldsymbol n)
					\in\boldsymbol{\mathcal A}(\varepsilon;\Omega)}
				\mathcal G(\varepsilon;\boldsymbol v,\boldsymbol n).
			\end{align*}
		\end{prbse}
		
		The next theorem follows immediately from Theorem \ref{sec3-thm1}.
		\begin{thm}
			\label{sec4-thm1}
			The scaled unknown $(\boldsymbol{u}(\varepsilon), \boldsymbol{m}(\varepsilon))$ of a solution $(\boldsymbol{u}^\varepsilon, \boldsymbol{m}^\varepsilon)$ to Problem \hyperref[problem3D]{$\mathcal{P}(\Omega^\varepsilon)$} is a solution to the minimization problem \hyperref[problem-scaled]{$\mathcal{P}(\varepsilon;\Omega)$}.
		\end{thm}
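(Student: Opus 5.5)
The plan is to show that the scaling $x^\varepsilon=(y,\varepsilon x_3)$ induces a bijection between $\boldsymbol{\mathcal A}(\Omega^\varepsilon)$ and $\boldsymbol{\mathcal A}(\varepsilon;\Omega)$. Under this bijection we will check that $\mathcal G(\varepsilon;\boldsymbol v,\boldsymbol m)=\frac1\varepsilon\mathcal G^\varepsilon(\boldsymbol v^\varepsilon,\boldsymbol m^\varepsilon)$ for every admissible pair. Since multiplication by the positive constant $1/\varepsilon$ preserves minimizers, the scaled pair $(\boldsymbol u(\varepsilon),\boldsymbol m(\varepsilon))$ of a minimizer $(\boldsymbol u^\varepsilon,\boldsymbol m^\varepsilon)$ given by Theorem \ref{sec3-thm1} is then a minimizer of Problem \hyperref[problem-scaled]{$\mathcal{P}(\varepsilon;\Omega)$}.

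\emph{Step 1: the admissible sets correspond.} The map $T_\varepsilon:\Omega\to\Omega^\varepsilon$, $T_\varepsilon(y,x_3)=(y,\varepsilon x_3)$, is an affine bi-Lipschitz bijection. Hence $w\mapsto w\circ T_\varepsilon$ is an isomorphism $H^1(\Omega^\varepsilon)\to H^1(\Omega)$, with $\partial_\alpha(w\circ T_\varepsilon)=(\partial_\alpha w)\circ T_\varepsilon$ and $\partial_3(w\circ T_\varepsilon)=\varepsilon(\partial_3^\varepsilon w)\circ T_\varepsilon$. It maps $\gamma\times[-1,1]$ onto $\Gamma_0^\varepsilon$ (recall $\gamma_0=\gamma$), so traces vanish on one if and only if they vanish on the other. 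This gives $\boldsymbol V(\Omega^\varepsilon)\leftrightarrow\boldsymbol V(\Omega)$. Since $g^{ij}(\varepsilon)=g^{ij,\varepsilon}\circ T_\varepsilon$ by definition, the saturation constraint transfers pointwise a.e., which gives $\boldsymbol M(\Omega^\varepsilon)\leftrightarrow\boldsymbol M(\varepsilon;\Omega)$.

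\emph{Step 2: the energies match term by term.} Change variables $x^\varepsilon=T_\varepsilon(x)$ in each integral over $\Omega^\varepsilon$, using $\mathrm{d}x^\varepsilon=\varepsilon\,\mathrm{d}x$.

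\begin{itemize}
\item By the derivative rule above, $\epsilon^\varepsilon_{i\|j}(\boldsymbol v^\varepsilon)\circ T_\varepsilon=\epsilon_{i\|j}(\varepsilon;\boldsymbol v)$, $m^\varepsilon_{p\|i}\circ T_\varepsilon=m_{p\|i}(\varepsilon;\boldsymbol m)$ and $e^\varepsilon_{ij}(\boldsymbol m^\varepsilon)\circ T_\varepsilon=e_{ij}(\varepsilon;\boldsymbol m)$. These follow directly from the definitions, using $\Gamma^{3,\varepsilon}_{\alpha3}=\Gamma^{p,\varepsilon}_{33}=0$ for the $(\alpha,3)$ and $(3,3)$ components.
\item The elastic, exchange and force integrals therefore each become $\varepsilon$ times the corresponding integral over $\Omega$.
\item For the magnetostatic term, note that $\boldsymbol\Theta(\varepsilon):=\boldsymbol\Theta\circ T_\varepsilon$ and $\widehat{\boldsymbol m}_\varepsilon=\widehat{\boldsymbol m}^{\,\varepsilon}$ by \eqref{sec4-eqn0}. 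The defining weak formulations for $v^\varepsilon_{\boldsymbol m^\varepsilon}$ and for $v_{\varepsilon,\boldsymbol m}$ in \eqref{sec4-eqn1} thus have the same right-hand side. By uniqueness up to constants, $\nabla v_{\varepsilon,\boldsymbol m}=\nabla v^\varepsilon_{\boldsymbol m^\varepsilon}$ in $\mathbb R^3$. This integral is not rescaled, which is why it appears as $\frac{\mu_0}{2\varepsilon}\int|\nabla v_{\varepsilon,\boldsymbol m}|^2$.
\end{itemize}

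Dividing everything by $\varepsilon$ yields the identity $\mathcal G(\varepsilon;\cdot)=\frac1\varepsilon\mathcal G^\varepsilon(\cdot)$.

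\emph{Step 3: conclusion.} For any $(\boldsymbol v,\boldsymbol n)\in\boldsymbol{\mathcal A}(\varepsilon;\Omega)$, let $(\boldsymbol v^\varepsilon,\boldsymbol n^\varepsilon)$ be its unscaled preimage. Then
\[
\mathcal G(\varepsilon;\boldsymbol u(\varepsilon),\boldsymbol m(\varepsilon))=\tfrac1\varepsilon\mathcal G^\varepsilon(\boldsymbol u^\varepsilon,\boldsymbol m^\varepsilon)\le\tfrac1\varepsilon\mathcal G^\varepsilon(\boldsymbol v^\varepsilon,\boldsymbol n^\varepsilon)=\mathcal G(\varepsilon;\boldsymbol v,\boldsymbol n).
\]
The only point requiring care is the magnetostatic term, namely checking that the stray field is literally the same function rather than a rescaled one. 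Since the physical domain $\widehat\Omega^\varepsilon$ and the physical field $\widehat{\boldsymbol m}$ are unchanged by the reparametrization, I expect this to be immediate once \eqref{sec4-eqn1} is written out.
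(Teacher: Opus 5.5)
Your proposal is correct and follows the route the paper intends. The paper gives no written proof: it relies on the definition $\mathcal G(\varepsilon;\boldsymbol v,\boldsymbol m):=\frac1\varepsilon\mathcal G^\varepsilon(\boldsymbol v^\varepsilon,\boldsymbol m^\varepsilon)$ under the scaling $x^\varepsilon=(y,\varepsilon x_3)$, and your Steps 1--3 supply exactly the omitted verification (bijection of the admissible sets, term-by-term change of variables, and $\widehat{\boldsymbol m}_\varepsilon=\widehat{\boldsymbol m}^{\,\varepsilon}$ so the stray field is unchanged); note also that your argument covers any solution, so Theorem \ref{sec3-thm1} is needed only for existence, not for the implication itself.
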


		For later use, we collect the standard asymptotic properties of the
		geometrical quantities. The notation $O(\varepsilon^r)$ denotes a
		remainder of order $\varepsilon^r$ in the supremum norm over
		$\overline\Omega$.
		
		\begin{lem} 
			\label{sec4:lem1}
			Let $\varepsilon_0$ be defined as in \cite[Theorem 3.1-1]{Cia00}. The vector fields $\boldsymbol{g}_i (\varepsilon)$ and $\boldsymbol{g}^j(\varepsilon)$ have the following properties:
			\begin{align*}
				\boldsymbol{g}_\alpha (\varepsilon) &= \boldsymbol{a}_\alpha - \varepsilon x_3 b^\sigma_\alpha \boldsymbol{a}_\sigma, \quad \boldsymbol{g}_3 (\varepsilon) = \boldsymbol{a}_3 , \\
				\boldsymbol{g}^\alpha (\varepsilon) &= \boldsymbol{a}^\alpha + \varepsilon x_3 b^\alpha_\sigma \boldsymbol{a}^\sigma + O(\varepsilon^2), \quad \boldsymbol{g}^3(\varepsilon) = \boldsymbol{a}^3.
			\end{align*}
			
			The functions $A^{ijk\ell} (\varepsilon) = A^{jik\ell} (\varepsilon) = A^{k\ell ij} (\varepsilon)$ have the following properties:
			\[
			A^{ijk\ell} (\varepsilon) = A^{ijk\ell} ( 0 ) + O(\varepsilon) , \quad A^{\alpha \beta \sigma 3} (\varepsilon) = A^{\alpha 333} (\varepsilon ) = 0,
			\]
			where
			\begin{align*}
				A^{\alpha \beta \sigma \tau} (0) &= \lambda a^{\alpha \beta} a^{\sigma \tau} + \mu (a^{\alpha \sigma}a^{\beta \tau} + a^{\alpha \tau} a^{\beta \sigma}), \\
				A^{\alpha \beta 33} (0) &= \lambda a^{\alpha \beta} , \quad A^{\alpha 3 \sigma 3} (0) = \mu a^{\alpha \sigma}, \quad A^{3333}(0) = \lambda + 2 \mu. 
			\end{align*}
			
			The functions $\Gamma^p_{ij} (\varepsilon)$ and $g(\varepsilon)$ have the following properties:
			\begin{align*}
				\Gamma^\sigma_{\alpha \beta} (\varepsilon) &= \Gamma^\sigma_{\alpha \beta} - \varepsilon x_3 (\partial_\alpha b^\sigma_\beta + \Gamma^\sigma_{\alpha \tau} b^\tau_\beta - \Gamma^\tau_{\alpha \beta} b^\sigma_\tau) + O(\varepsilon^2), \\
				\Gamma^3_{\alpha \beta} (\varepsilon) &= b_{\alpha \beta} - \varepsilon x_3 b^\sigma_\alpha b_{\sigma \beta}, \quad \partial_3 \Gamma^p_{\alpha \beta} (\varepsilon) = O(\varepsilon), \\
				\Gamma^\sigma_{\alpha 3} (\varepsilon) &= - b^\sigma_\alpha - \varepsilon x_3 b^\tau_\alpha b^\sigma_\tau + O(\varepsilon^2), \quad \Gamma^3_{\alpha 3} (\varepsilon) = \Gamma^p_{33} (\varepsilon ) =0, \\
				g(\varepsilon) &= a + O(\varepsilon).
			\end{align*}
			
			In particular, then, there exist constants $g_0>0$, $g_1$, and $C_0$, such that
			\[
			g_0 \leq g(\varepsilon)(x)  \leq g_1
			\textup{ \ and \ } 
			\sum_{i, j} \left| t_{ij} \right|^2 \leq C_0 A^{ijk\ell} (\varepsilon) (x) t_{k\ell} t_{ij}
			\]
			for all $0<\varepsilon\leq \varepsilon_0$, all $x \in \overline{\Omega}$, and all symmetric matrices $(t_{ij})$.
		\end{lem}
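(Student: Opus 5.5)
The plan is to compute everything directly from the explicit form of the scaled immersion
\[
\boldsymbol\Theta(\varepsilon)(x)=\boldsymbol\theta(y)+\varepsilon x_3\boldsymbol a_3(y).
\]
Differentiating gives $\boldsymbol g_3(\varepsilon)=\boldsymbol a_3$, since $\partial_3^\varepsilon=\varepsilon^{-1}\partial_3$. The Weingarten formula $\partial_\alpha\boldsymbol a_3=-b^\sigma_\alpha\boldsymbol a_\sigma$ then yields $\boldsymbol g_\alpha(\varepsilon)=\boldsymbol a_\alpha-\varepsilon x_3 b^\sigma_\alpha\boldsymbol a_\sigma$ exactly.

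The vectors $\boldsymbol g_\alpha(\varepsilon)$ are tangent, so $\boldsymbol g^3(\varepsilon)=\boldsymbol a_3$ and $\boldsymbol g^\alpha(\varepsilon)$ lies in the tangent plane. Write $\boldsymbol g_\alpha(\varepsilon)=(\delta^\sigma_\alpha-\varepsilon x_3 b^\sigma_\alpha)\boldsymbol a_\sigma$. The contravariant vectors are then obtained by inverting the $2\times2$ matrix $(\delta-\varepsilon x_3 b)$ through a Neumann series. Since $|\varepsilon x_3|\le\varepsilon\le\varepsilon_0$ and $b^\sigma_\alpha\in\mathcal C^1(\overline\omega)$ because $\boldsymbol\theta\in\mathcal C^3$, this gives $\boldsymbol g^\alpha(\varepsilon)=\boldsymbol a^\alpha+\varepsilon x_3 b^\alpha_\sigma\boldsymbol a^\sigma+O(\varepsilon^2)$, uniformly on $\overline\Omega$.

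From these formulas we get $g_{\alpha3}(\varepsilon)=0$, $g_{33}=1$, hence $g^{\alpha3}(\varepsilon)=0$ and $g^{33}=1$. Moreover $g^{\alpha\beta}(\varepsilon)=a^{\alpha\beta}+O(\varepsilon)$ and
\[
g(\varepsilon)=a\,\big[\det(\delta-\varepsilon x_3 b)\big]^2=a+O(\varepsilon).
\]
Substituting into the definition of $A^{ijk\ell}(\varepsilon)$:
\begin{itemize}
\item the vanishing of $g^{\alpha3}$ gives $A^{\alpha\beta\sigma3}(\varepsilon)=A^{\alpha333}(\varepsilon)=0$;
\item the limits $g^{\alpha\beta}\to a^{\alpha\beta}$ and $g^{33}=1$ give the stated values of $A^{ijk\ell}(0)$;
\item the symmetries $A^{ijk\ell}=A^{jik\ell}=A^{k\ell ij}$ are immediate from the formula.
\end{itemize}

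For the Christoffel symbols, we use $\Gamma^p_{ij}(\varepsilon)=\partial_i^\varepsilon\boldsymbol g_j(\varepsilon)\cdot\boldsymbol g^p(\varepsilon)$, the Gauss formula $\partial_\alpha\boldsymbol a_\beta=\Gamma^\tau_{\alpha\beta}\boldsymbol a_\tau+b_{\alpha\beta}\boldsymbol a_3$, and the Weingarten formula:
\begin{itemize}
\item $\partial_3^\varepsilon\boldsymbol g_3=0$ gives $\Gamma^p_{33}(\varepsilon)=0$;
\item $\partial_\alpha\boldsymbol g_3=-b^\sigma_\alpha\boldsymbol a_\sigma$ is tangent, giving $\Gamma^3_{\alpha3}(\varepsilon)=0$. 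Dotting with the expansion of $\boldsymbol g^\sigma(\varepsilon)$ gives $\Gamma^\sigma_{\alpha3}(\varepsilon)=-b^\sigma_\alpha-\varepsilon x_3 b^\tau_\alpha b^\sigma_\tau+O(\varepsilon^2)$;
\item for $\Gamma^3_{\alpha\beta}$, compute $\partial_\alpha\boldsymbol g_\beta(\varepsilon)\cdot\boldsymbol a_3=b_{\alpha\beta}-\varepsilon x_3 b^\sigma_\beta b_{\alpha\sigma}$. This is exact, because $\partial_\alpha\boldsymbol a_\sigma\cdot\boldsymbol a_3=b_{\alpha\sigma}$ and derivatives of $b^\sigma_\beta$ multiply tangent vectors;
\item for $\Gamma^\sigma_{\alpha\beta}(\varepsilon)$, expand $\partial_\alpha\boldsymbol g_\beta(\varepsilon)=\partial_\alpha\boldsymbol a_\beta-\varepsilon x_3\big(\partial_\alpha b^\tau_\beta\,\boldsymbol a_\tau+b^\tau_\beta\,\partial_\alpha\boldsymbol a_\tau\big)$ and dot with $\boldsymbol g^\sigma(\varepsilon)$. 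Collecting the $O(\varepsilon)$ terms gives $-\varepsilon x_3\big(\partial_\alpha b^\sigma_\beta+\Gamma^\sigma_{\alpha\tau}b^\tau_\beta\big)+\varepsilon x_3\Gamma^\tau_{\alpha\beta}b^\sigma_\tau$, as claimed.
\end{itemize}
The relation $\partial_3\Gamma^p_{\alpha\beta}(\varepsilon)=O(\varepsilon)$ follows because each of these expressions is smooth in $\varepsilon x_3$, so $\partial_3$ produces a factor $\varepsilon$.

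The main obstacle is the final uniform bounds. For $g_0\le g(\varepsilon)\le g_1$, I would use $g=a\,[\det(\delta-\varepsilon x_3 b)]^2$ together with $a\ge a_0>0$ on the compact $\overline\omega$, and shrink $\varepsilon_0$ if necessary so that the determinant stays in $[1/2,3/2]$. For the uniform positive definiteness of $A^{ijk\ell}(\varepsilon)$, the isotropic tensor satisfies
\[
A^{ijk\ell}t_{k\ell}t_{ij}\ge 2\mu\, g^{ik}g^{j\ell}t_{k\ell}t_{ij}\quad(\lambda\ge0).
\]
Writing this as $2\mu|\sum_{ij} t_{ij}\boldsymbol g^i\otimes\boldsymbol g^j|^2$, the claimed lower bound reduces to a uniform lower bound on the smallest eigenvalue of $(g^{ij}(\varepsilon))$. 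That lower bound holds by continuity and compactness, since $g^{ij}(\varepsilon)\to\mathrm{diag}\big((a^{\alpha\beta}),1\big)$ uniformly on $\overline\Omega$ and the limit is positive definite there; this may again require shrinking $\varepsilon_0$, which is consistent with \cite[Theorem 3.1-1]{Cia00}. Everything else is the same bookkeeping found in \cite[Chapter 3]{Cia00}, and I would cite it for routine expansions.
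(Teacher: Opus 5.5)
Your proposal is correct and follows essentially the paper's route. The paper simply defers to the proofs of Theorems 3.3-1 and 3.3-2 in \cite{Cia00}, which carry out your direct computation from $\boldsymbol\theta(y)+\varepsilon x_3\boldsymbol a_3(y)$ via the Gauss and Weingarten formulas, and your index bookkeeping checks out (e.g.\ $b^\sigma_\beta b_{\alpha\sigma}=b^\sigma_\alpha b_{\sigma\beta}$ in $\Gamma^3_{\alpha\beta}(\varepsilon)$). One refinement: you do not need to shrink $\varepsilon_0$. Every scaled coefficient has the form $F(y,\varepsilon x_3)$ with $F(y,t)$ continuous on the compact set $\overline\omega\times[-\varepsilon_0,\varepsilon_0]$, where $\det(\boldsymbol g_1,\boldsymbol g_2,\boldsymbol g_3)=\sqrt a\,\det(\delta-tb)>0$ by \cite[Theorem 3.1-1]{Cia00}, so the lower bounds on $g(\varepsilon)$ and on the smallest eigenvalue of $(g^{ij}(\varepsilon))$ follow by compactness on the full stated range $0<\varepsilon\le\varepsilon_0$.
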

		
		\begin{proof}
			We refer the reader to the proofs of Theorems 3.3-1 and 3.3-2 in \cite{Cia00}. The details are omitted.
		\end{proof}
		
		The above properties are purely geometrical and hold for general
		shell families, i.e., the ellipticity of the middle surface is not required. However, it becomes essential in the following Korn inequality (see, e.g., \cite[Theorem 4.1]{Pie22}).
		
		\begin{thm}
			\label{sec4-thm2}
			There exist constants $\varepsilon_1>0$ and $C_1>0$ such that
			\[
			\left\{
			\sum_\alpha\|v_\alpha\|_{H^1(\Omega)}^2
			+\|v_3\|_{L^2(\Omega)}^2
			\right\}^{1/2}
			\leq
			C_1
			\left\{
			\sum_{i,j}
			\|\epsilon_{i\|j}(\varepsilon;\boldsymbol v)\|_{L^2(\Omega)}^2
			\right\}^{1/2}
			\]
			for every $0<\varepsilon\leq\varepsilon_1$ and every
			$\boldsymbol v\in\boldsymbol V(\Omega)$. Here, the constant $C_1$ is independent of the thickness $2\varepsilon$.
		\end{thm}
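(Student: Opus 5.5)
The plan is to argue by contradiction, following the classical proof of the Korn inequality for families of elliptic membrane shells (Ciarlet–Lods). Suppose the inequality fails. Then there are sequences $\varepsilon_n\to0$ and $\boldsymbol v^n\in\boldsymbol V(\Omega)$ such that
\[
\sum_\alpha\|v^n_\alpha\|_{H^1(\Omega)}^2+\|v^n_3\|_{L^2(\Omega)}^2=1,
\qquad
\sum_{i,j}\|\epsilon_{i\|j}(\varepsilon_n;\boldsymbol v^n)\|_{L^2(\Omega)}^2\to0 .
\]
For a fixed $\varepsilon>0$ one could not lose control, since the three-dimensional Korn inequality in curvilinear coordinates holds with a constant depending on $\varepsilon$. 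So the whole difficulty is the limit $\varepsilon\to0$.

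\textbf{Step 1: extracting a limit.} After passing to a subsequence we get $v^n_\alpha\rightharpoonup v_\alpha$ in $H^1(\Omega)$ and $v^n_3\rightharpoonup v_3$ in $L^2(\Omega)$. Since $\epsilon_{3\|3}(\varepsilon_n;\boldsymbol v^n)=\varepsilon_n^{-1}\partial_3v^n_3\to0$, we obtain $\partial_3 v^n_3\to0$ in $L^2$. Hence $v_3$ is independent of $x_3$, and $v_3^n\in H^1$ in the $x_3$ direction only in the weak sense $\partial_3 v_3^n\to 0$.

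From $\epsilon_{\alpha\|3}\to0$ we get $\partial_3 v^n_\alpha=\varepsilon_n\big(2\epsilon_{\alpha\|3}-\partial_\alpha v_3^n+2\Gamma^\sigma_{\alpha3}v^n_\sigma\big)$. Here $\partial_\alpha v^n_3$ is only bounded in $H^{-1}$, so $\partial_3 v^n_\alpha\to0$ in $H^{-1}(\Omega)$. Because $v_\alpha^n$ is bounded in $H^1$, this implies $\partial_3 v_\alpha=0$. Thus $\boldsymbol v$ is independent of $x_3$, and by the trace condition its average $\bar{\boldsymbol v}$ lies in $\boldsymbol V_M(\omega)$. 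Passing to the limit in $\epsilon_{\alpha\|\beta}(\varepsilon_n;\boldsymbol v^n)$ with Lemma \ref{sec4:lem1} ($\Gamma^\sigma_{\alpha\beta}(\varepsilon)\to\Gamma^\sigma_{\alpha\beta}$, $\Gamma^3_{\alpha\beta}(\varepsilon)\to b_{\alpha\beta}$) gives $\gamma_{\alpha\beta}(\bar{\boldsymbol v})=0$. Theorem \ref{sec2-thm1} then yields $\bar{\boldsymbol v}=\boldsymbol 0$, so $\boldsymbol v=\boldsymbol 0$.

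\textbf{Step 2: upgrading to strong convergence.} This is the main obstacle, and the place where ellipticity enters quantitatively. Tangential components are handled by Rellich: $v^n_\alpha\to0$ strongly in $L^2$. Then from
\[
\tfrac12(\partial_\beta v^n_\alpha+\partial_\alpha v^n_\beta)=\epsilon_{\alpha\|\beta}+\Gamma^\sigma_{\alpha\beta}(\varepsilon_n)v^n_\sigma+\Gamma^3_{\alpha\beta}(\varepsilon_n)v^n_3
\]
we see that the symmetric tangential gradient is controlled by $v^n_3$.

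To get $v^n_3\to0$ strongly in $L^2$, the plan is to exploit the two-dimensional elliptic structure on the averages. Let $\bar{\boldsymbol v}^n:=\frac12\int_{-1}^1\boldsymbol v^n\,dx_3\in\boldsymbol V_M(\omega)$. Averaging the identity above, and using $\Gamma^p_{\alpha\beta}(\varepsilon)=\Gamma^p_{\alpha\beta}+O(\varepsilon)$, gives $\gamma_{\alpha\beta}(\bar{\boldsymbol v}^n)\to0$ in $L^2(\omega)$. Theorem \ref{sec2-thm1} then gives $\bar v^n_3\to0$ in $L^2(\omega)$. Next, $v^n_3-\bar v^n_3$ is controlled by Poincaré–Wirtinger in $x_3$: $\|v^n_3-\bar v^n_3\|_{L^2}\le C\|\partial_3 v^n_3\|_{L^2}\le C\varepsilon_n\|\epsilon_{3\|3}\|\to0$. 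Therefore $v^n_3\to0$ strongly in $L^2(\Omega)$.

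Given this, the identity above shows that the symmetric part of the tangential gradient of $v^n_\alpha$ tends to $0$ in $L^2$. For the full gradient, the first attempt is a Korn inequality applied slice-wise in $y$ (with $v^n_\alpha=0$ on $\gamma$) and integrated in $x_3$; this yields $\partial_\beta v^n_\alpha\to0$. It remains to show $\partial_3 v^n_\alpha\to0$ in $L^2$. For that, use $\varepsilon_n^{-1}\partial_3 v_\alpha^n=2\epsilon_{\alpha\|3}-\partial_\alpha v^n_3+\dots$ together with a duality argument (the $H^{-1}$ bound on $\partial_\alpha v_3^n$ and Nečas' inequality). If that is awkward, a cleaner route is the standard "three-dimensional Korn inequality without boundary conditions on a family of thin domains" from Ciarlet–Lods, which directly gives $\|\partial_3 v^n_\alpha\|_{L^2}\lesssim\|\epsilon(\varepsilon_n;\boldsymbol v^n)\|+\|\boldsymbol v^n\|$-type bounds.

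\textbf{Step 3: contradiction.} Steps 1 and 2 give $\sum_\alpha\|v^n_\alpha\|_{H^1}^2+\|v^n_3\|_{L^2}^2\to0$, which contradicts the normalization. This proves the inequality for all sufficiently small $\varepsilon$, with a constant independent of $\varepsilon$.
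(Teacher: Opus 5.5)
Your overall scheme is the standard contradiction/compactness argument for this inequality; the paper gives no proof of its own and only cites Piersanti's Theorem 4.1, going back to Ciarlet--Lods. Steps 1--2 are correct as far as they go. Averaging gives $\gamma_{\alpha\beta}(\bar{\boldsymbol v}^n)=\frac12\int_{-1}^1\epsilon_{\alpha\|\beta}(\varepsilon_n;\boldsymbol v^n)\,dx_3+O(\varepsilon_n)\|\boldsymbol v^n\|_{L^2(\Omega)}\to0$. The elliptic-surface Korn inequality plus Poincar\'e in $x_3$ then give $v_3^n\to0$ in $L^2(\Omega)$, and slice-wise planar Korn gives $\partial_\beta v^n_\alpha\to0$.

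The gap is the one step you leave open, $\partial_3v^n_\alpha\to0$ in $L^2(\Omega)$; without it there is no contradiction. Your first route does not close as described. Ne\v{c}as' inequality for $w^n:=\partial_3v^n_\alpha$ needs $\partial_3w^n=\partial_{33}v^n_\alpha\to0$ in $H^{-1}(\Omega)$. An $H^{-1}$ bound on $\partial_\alpha v^n_3$ only controls $\varepsilon_n\partial_3\partial_\alpha v^n_3$ in $H^{-2}$, and the trivial bound $\|\partial_3w^n\|_{H^{-1}}\le\|w^n\|_{L^2}$ is circular. The missing idea is to commute derivatives and use $\epsilon_{3\|3}$: $\partial_{33}v^n_\alpha=2\varepsilon_n\partial_3\big(\epsilon_{\alpha\|3}+\Gamma^\sigma_{\alpha3}(\varepsilon_n)v^n_\sigma\big)-\varepsilon_n^2\partial_\alpha\epsilon_{3\|3}$, which tends to $0$ in $H^{-1}(\Omega)$. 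Together with $\partial_3v^n_\alpha\to0$ and $\partial_\beta\partial_3v^n_\alpha=\partial_3\partial_\beta v^n_\alpha\to0$ in $H^{-1}(\Omega)$, Ne\v{c}as then works. Your second route appeals to an unspecified ``standard'' thin-domain Korn inequality. The bound you quote is true, but as written it is an unproved claim, not a step.

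The simplest repair, which also makes the slice-wise argument unnecessary, is to apply the ordinary Korn inequality on the fixed cylinder $\Omega$ to $\boldsymbol u^n:=(v^n_1,v^n_2,\varepsilon_nv^n_3)\in(H^1(\Omega))^3$, which vanishes on $\gamma\times[-1,1]$. With all strains evaluated at $(\varepsilon_n;\boldsymbol v^n)$, its flat symmetrized gradient is
\begin{itemize}
\item $e_{\alpha\beta}(\boldsymbol u^n)=\epsilon_{\alpha\|\beta}+\Gamma^p_{\alpha\beta}(\varepsilon_n)v^n_p$,
\item $e_{\alpha3}(\boldsymbol u^n)=\varepsilon_n\big(\epsilon_{\alpha\|3}+\Gamma^\sigma_{\alpha3}(\varepsilon_n)v^n_\sigma\big)$,
\item $e_{33}(\boldsymbol u^n)=\varepsilon_n^2\epsilon_{3\|3}$.
\end{itemize}
Hence $\|\boldsymbol u^n\|_{H^1(\Omega)}\le C\big(\sum_{i,j}\|\epsilon_{i\|j}(\varepsilon_n;\boldsymbol v^n)\|_{L^2(\Omega)}+\|\boldsymbol v^n\|_{L^2(\Omega)}\big)$ with $C$ independent of $n$. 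You have already shown $\boldsymbol v^n\to\boldsymbol 0$ in $(L^2(\Omega))^3$ (Rellich for $v^n_\alpha$, averaging for $v^n_3$). This yields $v^n_\alpha\to0$ in $H^1(\Omega)$, including $\partial_3v^n_\alpha$, and your Step 3 contradiction goes through.
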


		\section{Proof of $\Gamma$-convergence}
		\label{sec5}
		
		As mentioned in Section \ref{sec1}, we will use the notion of $\Gamma$-convergence to study the asymptotic behavior of the three-dimensional problem \hyperref[problem-scaled]{$\mathcal{P}(\varepsilon;\Omega)$} as the thickness $2\varepsilon$ goes to zero. The general idea behind $\Gamma$-convergence of a sequence of functionals $\mathcal{F}_\varepsilon$ to $\mathcal{F}_0$ (see, for example, \cite{Bra02}) can be understood as follows:
		\begin{enumerate}
			\item \textbf{Equi-coercivity:} For any sequence \( (y_\varepsilon)_\varepsilon \) such that \( \sup_\varepsilon \mathcal{F}_\varepsilon(y_\varepsilon) < \infty \), there exists some converging subsequence (possibly after rescaling), with a limit \( y \).
			\item \textbf{Liminf inequality:} For any sequence \( (y_\varepsilon)_\varepsilon \) converging in the above (possibly rescaled) sense to \( y \), we have \( \liminf_{\varepsilon \to 0} \mathcal{F}_\varepsilon(y_\varepsilon) \geq \mathcal{F}(y) \).
			\item \textbf{Recovery sequence:} For any \( y \), there exists an approximate sequence \( (y_\varepsilon)_\varepsilon \) converging to \( y \) such that \( \limsup_{\varepsilon \to 0} \mathcal{F}_\varepsilon(y_\varepsilon) \leq \mathcal{F}(y) \).
		\end{enumerate}
		
		To define the limit two-dimensional problem, we need to introduce the following quantities on the middle surface $\boldsymbol{\theta}(\overline{\omega})$. First, for $\boldsymbol m=(m_i)\in (H^1(\omega))^3$, define the associated magnetization vector field on the middle surface by
		$$
		\boldsymbol m_S :=m_i\boldsymbol a^i.
		$$
		Its saturation constraint is
		\[
		|\boldsymbol m_S|^2
		=a^{\alpha\beta}m_\alpha m_\beta+m_3^2=1
		\text{ a.e. in }\omega.
		\]
		Its surface gradient is given by
		\begin{equation}
			\label{sec5-eqn1}
			\nabla_S\boldsymbol m_S := \partial_\alpha\boldsymbol m_S\otimes\boldsymbol a^\alpha.
		\end{equation}
		More explicitly,
		\[
		\partial_\alpha\boldsymbol m_S =
		m_{\sigma\mid\alpha}\boldsymbol a^\sigma
		+m_{3\mid\alpha}\boldsymbol a^3,
		\]
		where
		\[
		m_{\sigma\mid\alpha} := \partial_\alpha m_\sigma -\Gamma_{\alpha\sigma}^{\tau}m_\tau -b_{\alpha\sigma}m_3,
		\qquad
		m_{3\mid\alpha} :=\partial_\alpha m_3+b_\alpha^\sigma m_\sigma.
		\]
		Consequently,
		\begin{equation}
			\label{sec5-eqn2}
			|\nabla_S\boldsymbol m_S|^2=a^{\alpha\beta}a^{\sigma\tau}m_{\sigma\mid\alpha}m_{\tau\mid\beta}+a^{\alpha\beta} m_{3\mid\alpha}m_{3\mid\beta}.
		\end{equation}
		
		We define
		\begin{align*}
			\boldsymbol M_S(\omega)&:=
			\left\{\boldsymbol m=(m_i)\in\ (H^1(\omega))^3:
			a^{\alpha\beta}m_\alpha m_\beta+m_3^2=1
			\text{ a.e. in }\omega\right\},\\
			\boldsymbol{\mathcal A}_M(\omega)&:=
			\boldsymbol V_M(\omega)\times\boldsymbol M_S(\omega).
		\end{align*}
		where $\boldsymbol{V}_M(\omega)$ is given in Theorem \ref{sec2-thm1}. As in the classical linear membrane shell theory, we define the two-dimensional
		elasticity tensor
		\begin{equation*}
			a^{\alpha\beta\sigma\tau}
			:=
			\frac{4\lambda\mu}{\lambda+2\mu}
			a^{\alpha\beta}a^{\sigma\tau}
			+2\mu\left(
			a^{\alpha\sigma}a^{\beta\tau}
			+a^{\alpha\tau}a^{\beta\sigma}\right).
		\end{equation*}
		
		For $(\boldsymbol\eta,\boldsymbol m)
		\in\boldsymbol{\mathcal A}_M(\omega)$, let
		\begin{align*}
			e_{\alpha\beta}(\boldsymbol m)
			&:=
			-m_\alpha m_\beta+\frac13a_{\alpha\beta},\\
			\mathcal E_{\alpha\beta}
			(\boldsymbol\eta,\boldsymbol m)
			&:=
			\gamma_{\alpha\beta}(\boldsymbol\eta)
			+e_{\alpha\beta}(\boldsymbol m).
		\end{align*}
		
		We are now in a position to define the two-dimensional problem \hyperref[membrane]{$\mathcal{P}_M(\omega)$}.
		\begin{prbsm}
			\label{membrane}
			Find $(\boldsymbol\zeta,\boldsymbol m)
			\in\boldsymbol{\mathcal A}_M(\omega)$ such that
			\begin{equation*}
				\mathcal G_M(\boldsymbol\zeta,\boldsymbol m)
				=
				\inf_{(\boldsymbol\eta,\boldsymbol n)
					\in\boldsymbol{\mathcal A}_M(\omega)}
				\mathcal G_M(\boldsymbol\eta,\boldsymbol n),
			\end{equation*}
			where
			\begin{align*}
				\mathcal G_M(\boldsymbol\eta,\boldsymbol m)
				:={}&
				\frac12\int_\omega
				a^{\alpha\beta\sigma\tau}
				\mathcal E_{\sigma\tau}(\boldsymbol\eta,\boldsymbol m)
				\mathcal E_{\alpha\beta}(\boldsymbol\eta,\boldsymbol m)
				\sqrt a\,\mathrm{d} y\\
				&+
				\int_\omega|\nabla_S\boldsymbol m_S|^2\sqrt a\,\mathrm{d} y
				+\frac1{\mu_0}\int_\omega m_3^2\sqrt a\,\mathrm{d} y
				-\int_\omega p^i\eta_i\sqrt a\,\mathrm{d} y
				-\int_\omega q^im_i\sqrt a\,\mathrm{d} y
			\end{align*}
			for each $(\boldsymbol\eta,\boldsymbol m) \in \boldsymbol{\mathcal A}_M(\omega)$, and
			\begin{align*}
				p^i(y):=\int_{-1}^{1}f^i(y,x_3)\,\mathrm{d} x_3, \quad
				q^i(y):=\int_{-1}^{1}h^i(y,x_3)\,\mathrm{d} x_3.
			\end{align*}
		\end{prbsm}
		
		We now formulate the main result on a fixed space. Let
		\[
		\boldsymbol{\mathcal X}
		:=
		(L^2(\Omega))^3\times(L^2(\Omega))^3,
		\]
		endowed with its weak topology. For each $\varepsilon>0$, extend the
		scaled functional to $\boldsymbol{\mathcal X}$ by setting
		\begin{equation*}
			\overline{\mathcal G}_\varepsilon
			(\boldsymbol v,\boldsymbol m)
			:=
			\begin{cases}
				\mathcal G(\varepsilon;\boldsymbol v,\boldsymbol m),
				&\text{if }(\boldsymbol v,\boldsymbol m)
				\in\boldsymbol{\mathcal A}(\varepsilon;\Omega),\\
				+\infty,&\text{otherwise}.
			\end{cases}
		\end{equation*}
		We identify each function on $\omega$ with its constant extension in the
		transverse variable $x_3$. The limiting functional
		$\overline{\mathcal G}_0:\boldsymbol{\mathcal X}
		\to(-\infty,+\infty]$ is defined by
		\begin{equation*}
			\overline{\mathcal G}_0(\boldsymbol v,\boldsymbol m)
			:=
			\begin{cases}
				\mathcal G_M(\boldsymbol\eta,\boldsymbol n),
				&\begin{aligned}
					&\text{if }\boldsymbol v(y,x_3)=\boldsymbol\eta(y),\\[-1mm]
					&\phantom{\text{if }}\boldsymbol m(y,x_3)=\boldsymbol n(y)
					\text{ and }
					(\boldsymbol\eta,\boldsymbol n)
					\in\boldsymbol{\mathcal A}_M(\omega),
				\end{aligned}\\
				+\infty,&\text{otherwise}.
			\end{cases}
		\end{equation*}
		
		\begin{thm}[$\Gamma$-Convergence]
			\label{sec5-thm1}
			The family
			$(\overline{\mathcal G}_\varepsilon)_{\varepsilon>0}$ is
			equi-coercive and sequentially $\Gamma$-converges to
			$\overline{\mathcal G}_0$ with respect to the weak topology of
			$\boldsymbol{\mathcal X}$. More precisely, the following assertions
			hold.
			\begin{enumerate}
				\item If
				\[
				\sup_{\varepsilon>0}
				\overline{\mathcal G}_\varepsilon
				(\boldsymbol v(\varepsilon),\boldsymbol m(\varepsilon))
				<+\infty,
				\]
				then there exists a subsequence, not relabeled, and
				$(\boldsymbol\eta,\boldsymbol m)
				\in\boldsymbol{\mathcal A}_M(\omega)$ such that
				\begin{align}
					\label{sec5-eqn3}
					v_\alpha(\varepsilon)&\rightharpoonup\eta_\alpha
					&&\text{weakly in }H^1(\Omega),\nonumber\\
					v_3(\varepsilon)&\rightharpoonup\eta_3
					&&\text{weakly in }L^2(\Omega),\\
					m_i(\varepsilon)&\rightharpoonup m_i
					&&\text{weakly in }H^1(\Omega),\nonumber\\
					m_i(\varepsilon)&\to m_i
					&&\text{strongly in }L^2(\Omega).\nonumber
				\end{align}
				All the limits in \eqref{sec5-eqn3} are independent of
				$x_3$.
				
				\item For every sequence
				$(\boldsymbol v(\varepsilon),\boldsymbol m(\varepsilon))$
				converging weakly in $\boldsymbol{\mathcal X}$ to
				$(\boldsymbol v,\boldsymbol m)$,
				\begin{equation}
					\label{sec5-eqn4}
					\overline{\mathcal G}_0(\boldsymbol v,\boldsymbol m)
					\leq
					\liminf_{\varepsilon\to0}
					\overline{\mathcal G}_\varepsilon
					(\boldsymbol v(\varepsilon),\boldsymbol m(\varepsilon)).
				\end{equation}
				
				\item For every
				$(\boldsymbol v,\boldsymbol m)\in\boldsymbol{\mathcal X}$, there exists
				a sequence
				$(\boldsymbol v(\varepsilon),\boldsymbol m(\varepsilon))$
				converging strongly in $\boldsymbol{\mathcal X}$ to
				$(\boldsymbol v,\boldsymbol m)$ such that
				\begin{equation}
					\label{sec5-eqn5}
					\overline{\mathcal G}_0(\boldsymbol v,\boldsymbol m)
					\geq
					\limsup_{\varepsilon\to0}
					\overline{\mathcal G}_\varepsilon
					(\boldsymbol v(\varepsilon),\boldsymbol m(\varepsilon)).
				\end{equation}
			\end{enumerate}
		\end{thm}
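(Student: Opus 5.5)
The plan is to prove the three assertions in order, using the coercivity estimates of Lemma~\ref{sec4:lem1} together with the Korn inequality of Theorem~\ref{sec4-thm2}, and to treat the magnetostatic term separately as a thin-film demagnetization limit.

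\textbf{Equi-coercivity.} Fix a sequence with $\sup_\varepsilon\overline{\mathcal G}_\varepsilon(\boldsymbol v(\varepsilon),\boldsymbol m(\varepsilon))<\infty$.
\begin{itemize}
\item By Lemma~\ref{sec4:lem1}, $g^{ij}(\varepsilon)$ is uniformly positive definite, so the constraint $g^{ij}(\varepsilon)m_im_j=1$ gives $|m_i(\varepsilon)|\le C$ a.e. Hence $e_{ij}(\varepsilon;\boldsymbol m(\varepsilon))$ is bounded in $L^\infty$.
\item The magnetostatic term is nonnegative. From the inequality $\sum|t_{ij}|^2\le C_0A^{ijk\ell}t_{k\ell}t_{ij}$, $g\ge g_0$, and Theorem~\ref{sec4-thm2} we get
\[
c\,\|\boldsymbol\epsilon(\varepsilon;\boldsymbol v)\|^2+c\,\|(m_{p\|i})\|^2-C\le C+C\|\boldsymbol v\|_{L^2}+C .
\]
This bounds $\epsilon_{i\|j}(\varepsilon;\boldsymbol v(\varepsilon))$ in $L^2$, $v_\alpha(\varepsilon)$ in $H^1$ and $v_3(\varepsilon)$ in $L^2$.
\item Since $\Gamma(\varepsilon)$ and $\boldsymbol m(\varepsilon)$ are bounded, the bound on $m_{p\|i}$ gives $\|\partial_\alpha m_p\|_{L^2}\le C$ and $\|\partial_3 m_p\|_{L^2}\le C\varepsilon$.
\item Rellich then yields the convergences \eqref{sec5-eqn3}, and the limit $m$ is independent of $x_3$.
\item The bound $\|\epsilon_{\alpha\|3}\|_{L^2}+\|\epsilon_{3\|3}\|_{L^2}\le C$ gives $\|\partial_3v_i\|_{H^{-1}}=O(\varepsilon)$, so the limits $\eta_i$ are independent of $x_3$.
\item The traces on $\gamma\times[-1,1]$ pass to the limit, so $\eta_\alpha\in H^1_0(\omega)$.
\item Strong $L^2$ convergence of $m$ and uniform convergence $g^{ij}(\varepsilon)\to g^{ij}(0)$ (with $g^{\alpha3}(0)=0$, $g^{33}=1$) give $a^{\alpha\beta}m_\alpha m_\beta+m_3^2=1$.
\end{itemize}

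\textbf{Liminf inequality.} We may assume the energies are bounded, so the previous step applies. Then $\epsilon_{\alpha\|\beta}(\varepsilon;\boldsymbol v(\varepsilon))\rightharpoonup\gamma_{\alpha\beta}(\boldsymbol\eta)$ weakly in $L^2$, using Lemma~\ref{sec4:lem1} for $\Gamma^p_{\alpha\beta}(\varepsilon)$. Also $m_i m_j\to m_im_j$ strongly in $L^2$, being bounded in $L^\infty$ and convergent a.e. Hence $E_{\alpha\|\beta}\rightharpoonup\mathcal E_{\alpha\beta}(\boldsymbol\eta,\boldsymbol m)$. Along a subsequence, $E_{\alpha\|3}\rightharpoonup E_{\alpha3}$ and $E_{3\|3}\rightharpoonup E_{33}$ for some unidentified $L^2$ functions.

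\begin{itemize}
\item \emph{Elastic term.} Replace $A(\varepsilon)\sqrt{g(\varepsilon)}$ by $A(0)\sqrt a$ at cost $o(1)$ and use weak lower semicontinuity of the convex quadratic form. Then minimize pointwise over the free variables $E_{\alpha3},E_{33}$, which gives $E_{\alpha3}=0$ and $E_{33}=-\frac{\lambda}{\lambda+2\mu}a^{\alpha\beta}E_{\alpha\beta}$. The result is $\frac12\int_\Omega\frac12a^{\alpha\beta\sigma\tau}\mathcal E_{\sigma\tau}\mathcal E_{\alpha\beta}\sqrt a$, which equals the first term of $\mathcal G_M$ after integrating over $x_3\in(-1,1)$.
\item \emph{Exchange term.} Since $\varepsilon^{-1}\partial_3m_p$ is bounded, dropping the nonnegative $\varepsilon^{-1}\partial_3$ contributions only lowers the energy. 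One keeps $m_{p\|\alpha}(\varepsilon)\rightharpoonup m_{p\|\alpha}$ and $-\Gamma^q_{3p}(\varepsilon)m_q\to b$-terms. These combine exactly into $m_{\sigma|\alpha}$ and $m_{3|\alpha}$, and lower semicontinuity together with \eqref{sec5-eqn2} gives $\int_\omega|\nabla_S\boldsymbol m_S|^2\sqrt a$.
\item \emph{Loads.} These pass to the limit by weak convergence.
\item \emph{Magnetostatic term.} Use the dual formulation
\[
\frac{\mu_0}{2}\int|\nabla v_{\varepsilon,\boldsymbol m}|^2=\sup_\varphi\Big[\int_{\widehat\Omega^\varepsilon}\widehat{\boldsymbol m}_\varepsilon\cdot\nabla\varphi-\frac{\mu_0}2\int|\nabla\varphi|^2\Big].
\]
Test with $\varphi_\varepsilon(\boldsymbol\Theta(y)+t\boldsymbol a_3(y))=\varepsilon\psi(y,t/\varepsilon)$ for smooth $\psi$ on $\omega\times\mathbb R$, constant in $x_3$ outside $[-1,1]$ and cut off near $\partial\omega$. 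Its gradient is $\partial_3\psi\,\boldsymbol a_3+O(\varepsilon)$, and $\widehat{\boldsymbol m}_\varepsilon\cdot\boldsymbol a_3=m_3$. Dividing by $\varepsilon$ gives the lower bound $\int_\Omega(m_3\partial_3\psi-\frac{\mu_0}2|\partial_3\psi|^2)\sqrt a+o(1)$. Optimizing with $\partial_3\psi\to m_3/\mu_0$ on $(-1,1)$ by density yields $\frac1{2\mu_0}\int_\Omega m_3^2\sqrt a=\frac1{\mu_0}\int_\omega m_3^2\sqrt a$.
\end{itemize}

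\textbf{Recovery sequence.} By density and continuity of $\mathcal G_M$ on $(H^1_0)^2\times L^2\times H^1$, it suffices to treat smooth $\eta_\alpha\in\mathcal D(\omega)$ and $\eta_3\in\mathcal D(\omega)$; we then conclude with a diagonal argument.

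\begin{itemize}
\item \emph{Magnetization.} Keep the physical vector fixed: $m_i(\varepsilon):=\boldsymbol m_S\cdot\boldsymbol g_i(\varepsilon)$. Then $g^{ij}(\varepsilon)m_im_j=|\boldsymbol m_S|^2=1$ exactly, $m(\varepsilon)\to m$ in $H^1$, $\partial_3m(\varepsilon)=O(\varepsilon)$, and the exchange energy converges to $\int_\omega|\nabla_S\boldsymbol m_S|^2\sqrt a$.
\item \emph{Displacement.} Take the standard membrane ansatz
\[
v_\alpha(\varepsilon)=\eta_\alpha+\varepsilon x_3 w_\alpha,\qquad v_3(\varepsilon)=\eta_3+\varepsilon x_3w_3,
\]
with smooth compactly supported correctors chosen as follows. $w_\alpha$ is chosen so that $E_{\alpha\|3}\to0$, cancelling $\partial_\alpha\eta_3+2b^\sigma_\alpha\eta_\sigma-2m_\alpha m_3$ after regularization. $w_3$ is chosen so that $E_{3\|3}\to-\frac{\lambda}{\lambda+2\mu}a^{\alpha\beta}\mathcal E_{\alpha\beta}+(m_3^2-\frac13)$, again up to smooth approximation. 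The elastic energy then converges to the reduced one.
\item \emph{Magnetostatic term: the main obstacle.} For the upper bound one needs $\limsup\frac{\mu_0}{2\varepsilon}\int|\nabla v|^2\le\frac1{\mu_0}\int_\omega m_3^2\sqrt a$. The energy equals $\frac1{2\mu_0}\|P(\chi\widehat{\boldsymbol m}_\varepsilon)\|^2$, where $P$ is the $L^2$ projection onto gradients. Split $\widehat{\boldsymbol m}_\varepsilon=m_3\boldsymbol a_3+\boldsymbol m_{\tan}$.
\begin{itemize}
\item For the normal part, $\|P(\chi m_3\boldsymbol a_3)\|^2\le\|\chi m_3\|^2=\varepsilon\int_\Omega m_3^2\sqrt{g}$, which is already the right bound.
\item For the tangential part, we aim to show $\|P(\chi\boldsymbol m_{\tan})\|^2=o(\varepsilon)$ using that $\boldsymbol m_S\in H^1$. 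Its distributional divergence consists of a bulk $O(1)$ term on a set of volume $O(\varepsilon)$ plus a lateral surface charge on $\partial\omega\times(-\varepsilon,\varepsilon)$. The first attempt is to test against $\varphi$ and bound $|\int\chi\boldsymbol m_{\tan}\cdot\nabla\varphi|$ via integration by parts along the tangential directions plus a trace estimate on the thin lateral boundary. If this proves delicate, first approximate $\boldsymbol m_S$ by fields vanishing near $\gamma$. That approximation is incompatible with $|\boldsymbol m_S|=1$, however, so an alternative is to estimate directly with the Newtonian-potential kernel, showing the tangential contribution is $O(\varepsilon^2|\log\varepsilon|)$.
\end{itemize}
The cross term is then controlled by Cauchy--Schwarz.
\end{itemize}
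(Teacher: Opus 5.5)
The genuine gap is the magnetostatic term in the upper bound (assertion 3). You reduce it, via $\frac{\mu_0}{2}\int|\nabla v|^2=\frac1{2\mu_0}\|P(\chi\widehat{\boldsymbol m}_\varepsilon)\|^2$ and the trivial bound on the normal part, to proving $\|P(\chi\boldsymbol m_{\mathrm{tan}})\|^2=o(\varepsilon)$. You never prove this. You list three strategies, carry none through, and abandon the one that works for a reason that does not apply. The map $\boldsymbol m\mapsto P(\chi\boldsymbol m)$ is linear, so the saturation constraint plays no role in this estimate.

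Here is how to close it. Choose $m^\delta_\alpha\in\mathcal D(\omega)$ with $\boldsymbol m^\delta:=m^\delta_\alpha\boldsymbol a^\alpha\to\boldsymbol m_{\mathrm{tan}}$ in $L^2(\omega)$, extended constantly along the normal fibres. Then $\|P(\chi(\boldsymbol m_{\mathrm{tan}}-\boldsymbol m^\delta))\|^2\le C\varepsilon\|\boldsymbol m_{\mathrm{tan}}-\boldsymbol m^\delta\|_{L^2(\omega)}^2$. The field $\boldsymbol m^\delta$ carries no charge on the upper and lower faces, since their normal is $\boldsymbol a_3$ and $\boldsymbol m^\delta\cdot\boldsymbol a_3=0$. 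It carries none on the lateral face either, by compact support. Hence, after subtracting a constant from $\varphi$, H\"older and Sobolev give $\bigl|\int_{\widehat\Omega^\varepsilon}\boldsymbol m^\delta\cdot\nabla\varphi\bigr|=\bigl|\int_{\widehat\Omega^\varepsilon}\operatorname{div}\boldsymbol m^\delta\,\varphi\bigr|\le C_\delta\varepsilon^{5/6}\|\nabla\varphi\|_{L^2(\mathbb R^3)}$, that is, $\|P(\chi\boldsymbol m^\delta)\|^2=O(\varepsilon^{5/3})$. Letting $\varepsilon\to0$ and then $\delta\to0$ yields the missing $o(\varepsilon)$. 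Without some such argument, assertion 3 is not established.

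The paper needs no such estimate. It rescales the stray field $\boldsymbol d_\varepsilon=-\mu_0\nabla v_{\varepsilon,\boldsymbol m(\varepsilon)}$ along the normal fibres to obtain $\boldsymbol D_\varepsilon$, bounded in $L^2(\omega\times\mathbb R)$. It then identifies the weak limit as $-m_3\boldsymbol a_3\chi_{(-1,1)}$:
\begin{itemize}
\item the tangential components are $s$-independent by curl-freeness;
\item the normal component plus $m_3\chi_{(-1,1)}$ is $s$-independent by the Maxwell equation;
\item an $s$-independent function in $L^2(\omega\times\mathbb R)$ vanishes.
\end{itemize}
The identity $\frac{\mu_0}{2\varepsilon}\int|\nabla v|^2=-\frac1{2\mu_0}\int_\Omega\boldsymbol{\mathfrak m}_\varepsilon\cdot\boldsymbol D_\varepsilon\sqrt{g(\varepsilon)}$ is linear in $\boldsymbol D_\varepsilon$. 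So weak--strong convergence gives the exact limit $\frac1{\mu_0}\int_\omega m_3^2\sqrt a$ along every sequence with $\boldsymbol{\mathfrak m}_\varepsilon\to\boldsymbol m_S$ in $L^2$. Lower and upper bounds therefore come at once, the recovery sequence included.

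Your dual-principle lower bound, testing with $\varepsilon\psi(y,t/\varepsilon)$, is a correct alternative for the liminf half. You must add a cutoff at $|t|\sim\varepsilon_0$ so that $\varphi_\varepsilon\in L^{1,2}(\mathbb R^3)$; this costs only $O(\varepsilon)$ after dividing by $\varepsilon$. But the bound is one-sided, which is exactly why you were left needing a separate upper bound.

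The rest of your proposal follows the paper, with two small slips.
\begin{itemize}
\item It is $w_3$ itself, not the limit of $E_{3\|3}$, that must equal $m_3^2-\frac13-\frac{\lambda}{\lambda+2\mu}a^{\alpha\beta}\mathcal E_{\alpha\beta}$.
\item In the exchange lower bound you must discard the whole nonnegative block $g^{33}g^{pq}m_{p\|3}m_{q\|3}$, not only its $\varepsilon^{-1}\partial_3m_p$ part. The $b$-terms of $m_{\sigma|\alpha}$ and $m_{3|\alpha}$ already arise inside $m_{p\|\alpha}(\varepsilon)$, from $\Gamma^3_{\alpha\sigma}(\varepsilon)\to b_{\alpha\sigma}$ and $\Gamma^\sigma_{\alpha3}(\varepsilon)\to-b^\sigma_\alpha$.
\end{itemize}
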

		
		The proof of Theorem \ref{sec5-thm1} is divided into the following
		lemmas. The first one provides the necessary compactness.
		
		\begin{lem}[Compactness]
			\label{sec5-lem1}
			Let
			$(\boldsymbol v(\varepsilon),\boldsymbol m(\varepsilon))
			\in\boldsymbol{\mathcal A}(\varepsilon;\Omega)$ satisfy
			\[
			\sup_{\varepsilon>0}
			\mathcal G(\varepsilon;
			\boldsymbol v(\varepsilon),\boldsymbol m(\varepsilon))
			<+\infty.
			\]
			Then there exists a subsequence and
			$(\boldsymbol\eta,\boldsymbol m)
			\in\boldsymbol{\mathcal A}_M(\omega)$ for which all the convergences in
			\eqref{sec5-eqn3} hold.
		\end{lem}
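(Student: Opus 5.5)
The argument is the usual one: a priori bounds for each term of $\mathcal G(\varepsilon;\cdot,\cdot)$, weak compactness, and then identification of the limits as functions independent of $x_3$.

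\textbf{Step 1: the magnetization.} The saturation constraint $g^{ij}(\varepsilon)m_im_j=1$ holds. By Lemma \ref{sec4:lem1}, the matrices $(g^{ij}(\varepsilon))$ are uniformly positive definite for $\varepsilon\le\varepsilon_0$, so $|m_i(\varepsilon)|\le C$ a.e. Consequently:
\begin{itemize}
\item $\|m_i(\varepsilon)\|_{L^2(\Omega)}\le C$;
\item the tensor $e_{ij}(\varepsilon;\boldsymbol m(\varepsilon))$ is bounded in $L^\infty(\Omega)$;
\item the Zeeman term $\int_\Omega h^im_i\sqrt{g(\varepsilon)}\,\mathrm dx$ is bounded.
\end{itemize}
The magnetostatic term is nonnegative and can be dropped from below.

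\textbf{Step 2: the displacement.} Using the ellipticity constant $C_0$ and $g\ge g_0$ from Lemma \ref{sec4:lem1}, together with $(a+b)^2\ge \tfrac12 a^2-b^2$, we get
\[
\mathcal G(\varepsilon;\boldsymbol v(\varepsilon),\boldsymbol m(\varepsilon))\ge c\sum_{i,j}\|\epsilon_{i\|j}(\varepsilon;\boldsymbol v(\varepsilon))\|^2_{L^2(\Omega)}-C-C\|\boldsymbol v(\varepsilon)\|_{L^2(\Omega)}+c\sum_{i,j}\|m_{i\|j}(\varepsilon;\boldsymbol m(\varepsilon))\|^2_{L^2(\Omega)}.
\]
The Korn inequality of Theorem \ref{sec4-thm2} controls $\|v_\alpha\|_{H^1}+\|v_3\|_{L^2}$ by the strain norm. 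Absorbing the linear force term then gives, for $\varepsilon\le\min(\varepsilon_0,\varepsilon_1)$:
\begin{itemize}
\item $\|v_\alpha(\varepsilon)\|_{H^1(\Omega)}\le C$ and $\|v_3(\varepsilon)\|_{L^2(\Omega)}\le C$;
\item $\|\epsilon_{i\|j}(\varepsilon;\boldsymbol v(\varepsilon))\|_{L^2(\Omega)}\le C$;
\item $\|m_{p\|i}(\varepsilon;\boldsymbol m(\varepsilon))\|_{L^2(\Omega)}\le C$ (the exchange term controls these).
\end{itemize}
Only small $\varepsilon$ matter for the limit, so the restriction on $\varepsilon$ is harmless.

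\textbf{Step 3: gradient bounds on $\boldsymbol m(\varepsilon)$.} The Christoffel symbols are bounded and $m_q$ is bounded, so $\|m_{p\|i}\|_{L^2}\le C$ gives $\|\partial_\alpha m_p(\varepsilon)\|_{L^2}\le C$. It also gives $\|\tfrac1\varepsilon\partial_3 m_p(\varepsilon)\|_{L^2}\le C$, hence $\|\partial_3 m_p(\varepsilon)\|_{L^2}\le C\varepsilon$. Thus $\boldsymbol m(\varepsilon)$ is bounded in $(H^1(\Omega))^3$. Rellich's theorem yields a subsequence with $m_i(\varepsilon)\rightharpoonup m_i$ weakly in $H^1(\Omega)$ and strongly in $L^2(\Omega)$, and $\partial_3 m_i=0$. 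The limit therefore does not depend on $x_3$ and belongs to $H^1(\omega)$.

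\textbf{Step 4: the limit constraint.} The constraint passes to the limit by strong $L^2$ convergence up to a further a.e.-convergent subsequence. Lemma \ref{sec4:lem1} gives $g^{ij}(\varepsilon)\to g^{ij}(0)$ uniformly, with $g^{\alpha\beta}(0)=a^{\alpha\beta}$, $g^{\alpha3}(0)=0$ and $g^{33}(0)=1$. Hence
\[
a^{\alpha\beta}m_\alpha m_\beta+m_3^2=1 \quad\text{a.e.},
\]
so $\boldsymbol m\in\boldsymbol M_S(\omega)$.

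\textbf{Step 5: the displacement limit is independent of $x_3$.} This is the main point. Extract $v_\alpha(\varepsilon)\rightharpoonup\eta_\alpha$ in $H^1(\Omega)$ and $v_3(\varepsilon)\rightharpoonup\eta_3$ in $L^2(\Omega)$.
\begin{itemize}
\item For $\eta_3$: the bound on $\epsilon_{3\|3}=\varepsilon^{-1}\partial_3v_3$ gives $\|\partial_3 v_3(\varepsilon)\|_{L^2}\le C\varepsilon$. Then $\partial_3 v_3(\varepsilon)\to0$ in $H^{-1}(\Omega)$, while it also converges to $\partial_3\eta_3$ in the sense of distributions, so $\partial_3\eta_3=0$.
\item For $\eta_\alpha$: the bound on $\epsilon_{\alpha\|3}$, together with $\partial_\alpha v_3$ being bounded in $H^{-1}$ and $\Gamma^\sigma_{\alpha3}(\varepsilon)v_\sigma$ being bounded in $L^2$, gives $\partial_3 v_\alpha(\varepsilon)=\varepsilon\bigl(2\epsilon_{\alpha\|3}-\partial_\alpha v_3+2\Gamma^\sigma_{\alpha3}(\varepsilon)v_\sigma\bigr)\to0$ in $H^{-1}(\Omega)$. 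Hence $\partial_3\eta_\alpha=0$.
\end{itemize}
So $\eta_i$ is independent of $x_3$. Since $v_\alpha(\varepsilon)=0$ on $\gamma\times[-1,1]$ and the trace is weakly continuous, $\eta_\alpha\in H^1_0(\omega)$. Also $\eta_3\in L^2(\omega)$, because an $L^2(\Omega)$ function independent of $x_3$ is an $L^2(\omega)$ function. This gives $\boldsymbol\eta\in\boldsymbol V_M(\omega)$.

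The only delicate step is Step 5, the distributional argument for $\eta_3$, since $v_3$ is bounded only in $L^2$. It is handled by working in $H^{-1}$ as above.
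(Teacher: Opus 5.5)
Your proposal is correct and follows essentially the same route as the paper: $L^\infty$ bounds on $m_i(\varepsilon)$ from the saturation constraint, uniform ellipticity together with the Korn inequality of Theorem~\ref{sec4-thm2} and the exchange energy to get the $H^1$ and $\varepsilon^{-1}\partial_3$ bounds, Rellich for the strong convergence of the magnetization, and the identities for $\partial_3 v_3(\varepsilon)$ and $\partial_3 v_\alpha(\varepsilon)$ to show that the limits are independent of $x_3$. One small slip is in your closing remark: the step that genuinely needs the negative-norm argument is the one for $\eta_\alpha$, because of the term $\varepsilon\,\partial_\alpha v_3(\varepsilon)$ with $v_3(\varepsilon)$ bounded only in $L^2$, whereas for $\eta_3$ you already have $\partial_3 v_3(\varepsilon)=\varepsilon\,\epsilon_{3\|3}(\varepsilon;\boldsymbol v(\varepsilon))\to 0$ strongly in $L^2(\Omega)$.
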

		
		\begin{proof}
			The saturation constraint, the uniform bounds for
			$g^{ij}(\varepsilon)$, and Lemma \ref{sec4:lem1} imply
			\[
			\|m_i(\varepsilon)\|_{L^\infty(\Omega)}\leq C.
			\]
			Hence the components
			$e_{ij}(\varepsilon;\boldsymbol m(\varepsilon))$ are uniformly bounded
			in $L^\infty(\Omega)$. By the uniform ellipticity of
			$A^{ijk\ell}(\varepsilon)$, Young's inequality, and Theorem
			\ref{sec4-thm2}, the elastic energy and the mechanical loading term give
			\begin{align*}
				&\sum_\alpha
				\|v_\alpha(\varepsilon)\|_{H^1(\Omega)}^2
				+\|v_3(\varepsilon)\|_{L^2(\Omega)}^2\\
				&\qquad\leq
				C\left(
				1+\int_\Omega
				A^{ijk\ell}(\varepsilon)
				E_{k\|\ell}(\varepsilon)
				E_{i\|j}(\varepsilon)
				\sqrt{g(\varepsilon)}\,\mathrm{d} x\right).
			\end{align*}
			Thanks to the saturation constraint, the magnetic loading term is uniformly bounded. The exchange energy, together with the uniform
			bounds for the Christoffel symbols, then yields
			\begin{align}
				\label{sec5-eqn6}
				&\sum_\alpha
				\|v_\alpha(\varepsilon)\|_{H^1(\Omega)}^2
				+\|v_3(\varepsilon)\|_{L^2(\Omega)}^2\nonumber\\
				&\quad+
				\sum_i\left(
				\|m_i(\varepsilon)\|_{H^1(\Omega)}^2+
				\frac1{\varepsilon^2}
				\|\partial_3m_i(\varepsilon)\|_{L^2(\Omega)}^2
				\right)\leq C.
			\end{align}
			Consequently, after extracting a subsequence, the weak convergences in
			\eqref{sec5-eqn3} hold. The compact embedding
			$H^1(\Omega)\hookrightarrow L^2(\Omega)$ gives the strong convergence
			of the magnetization.
			
			Estimate \eqref{sec5-eqn6} implies $\partial_3m_i=0$. The boundedness
			of the scaled strains also gives $\partial_3\eta_i=0$ in the sense of
			distributions. Indeed,
			\[
			\partial_3v_3(\varepsilon)
			=
			\varepsilon\epsilon_{3\|3}
			(\varepsilon;\boldsymbol v(\varepsilon)),
			\]
			whereas
			\[
			\partial_3v_\alpha(\varepsilon)
			=
			2\varepsilon\epsilon_{\alpha\|3}
			(\varepsilon;\boldsymbol v(\varepsilon))
			-\varepsilon\partial_\alpha v_3(\varepsilon)
			+2\varepsilon\Gamma_{\alpha3}^{\sigma}(\varepsilon)
			v_\sigma(\varepsilon).
			\]
			The first right-hand side tends to zero in $L^2(\Omega)$ and the second
			tends to zero in $\mathcal D'(\Omega)$.
			
			Finally, the strong $L^2$ convergence of the magnetization and
			$g^{ij}(\varepsilon)\to g^{ij}(0)$ uniformly give
			\[
			a^{\alpha\beta}m_\alpha m_\beta+m_3^2=1
			\quad\text{a.e. in }\omega.
			\]
			The trace condition passes to the weak limit for
			$\eta_\alpha$. Therefore
			$(\boldsymbol\eta,\boldsymbol m)
			\in\boldsymbol{\mathcal A}_M(\omega)$.
		\end{proof}

		\begin{lem}[Limit of the magnetic energies]
			\label{sec5-lem2}
			Let
			$(\boldsymbol v(\varepsilon),\boldsymbol m(\varepsilon))
			\in\boldsymbol{\mathcal A}(\varepsilon;\Omega)$ as in Lemma \ref{sec5-lem1}, and set
			\[
			\boldsymbol{\mathfrak m}_\varepsilon(x)
			:=
			m_i(\varepsilon)(x)\boldsymbol g^i(\varepsilon)(x).
			\]
			Then, up to a subsequence (not relabeled),
			$\boldsymbol{\mathfrak m}_\varepsilon$ converges weakly in $(H^1(\Omega))^3$ to $\boldsymbol m_S = m_i \boldsymbol{a}^i$, 
			\begin{align}
				\label{sec5-eqn7}
				&\liminf_{\varepsilon\to0}
				\frac12\int_\Omega
				g^{ij}(\varepsilon)g^{pq}(\varepsilon)
				m_{p\|i}(\varepsilon;\boldsymbol m(\varepsilon))
				m_{q\|j}(\varepsilon;\boldsymbol m(\varepsilon))
				\sqrt{g(\varepsilon)}\,\mathrm{d} x\nonumber\\
				&\hspace{35mm}\geq
				\int_\omega|\nabla_S\boldsymbol m_S|^2\sqrt a\,\mathrm{d} y,
			\end{align}
			and
			\begin{equation}
				\label{sec5-eqn8}
				\lim_{\varepsilon\to0}
				\frac{\mu_0}{2\varepsilon}
				\int_{\mathbb R^3}
				|\nabla v_{\varepsilon,\boldsymbol m(\varepsilon)}|^2\,\mathrm{d} \xi
				=
				\frac1{\mu_0}\int_\omega m_3^2\sqrt a\,\mathrm{d} y.
			\end{equation}
			Conversely, for each
			$\boldsymbol m\in\boldsymbol M_S(\omega)$, define $\boldsymbol{m}^\varepsilon = (m_i^\varepsilon)$ by
			\begin{equation}
				\label{sec5-eqn9}
				m_i^\varepsilon(y,x_3)
				:=
				\boldsymbol m_S(y)\cdot
				\boldsymbol g_i(\varepsilon)(y,x_3).
			\end{equation}
			Then
			$\boldsymbol m^\varepsilon
			\in\boldsymbol M(\varepsilon;\Omega)$,
			$\boldsymbol m^\varepsilon\to\boldsymbol m$ strongly in
			$(H^1(\Omega))^3$, and equality is attained in
			\eqref{sec5-eqn7} in the limit; moreover,
			\eqref{sec5-eqn8} holds for this sequence.
		\end{lem}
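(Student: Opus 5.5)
The plan is to work with the Cartesian vector field $\boldsymbol{\mathfrak m}_\varepsilon=m_i(\varepsilon)\boldsymbol g^i(\varepsilon)$ rather than with the covariant components. Since $\boldsymbol g^i(\varepsilon)\to\boldsymbol a^i$ in $\mathcal C^1(\overline\Omega)$ (Lemma \ref{sec4:lem1}), and $\partial_3\boldsymbol g^i(\varepsilon)=O(\varepsilon)$, the bounds \eqref{sec5-eqn6} and the convergences \eqref{sec5-eqn3} of Lemma \ref{sec5-lem1} give $\boldsymbol{\mathfrak m}_\varepsilon\rightharpoonup \boldsymbol m_S$ weakly in $(H^1(\Omega))^3$ and strongly in $(L^2(\Omega))^3$. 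They also give $\varepsilon^{-1}\|\partial_3\boldsymbol{\mathfrak m}_\varepsilon\|_{L^2}\le C$. The key identity is the definition of covariant derivatives:
\[
\partial_\alpha\boldsymbol{\mathfrak m}_\varepsilon=m_{p\|\alpha}(\varepsilon;\boldsymbol m(\varepsilon))\boldsymbol g^p(\varepsilon),\qquad \tfrac1\varepsilon\partial_3\boldsymbol{\mathfrak m}_\varepsilon=m_{p\|3}(\varepsilon;\boldsymbol m(\varepsilon))\boldsymbol g^p(\varepsilon).
\]
Hence the exchange integrand equals
\[
\bigl|\partial_\alpha\boldsymbol{\mathfrak m}_\varepsilon\otimes\boldsymbol g^\alpha(\varepsilon)+\tfrac1\varepsilon\partial_3\boldsymbol{\mathfrak m}_\varepsilon\otimes\boldsymbol g^3(\varepsilon)\bigr|^2 .
\]
Because $\boldsymbol g^3=\boldsymbol a_3$ is orthogonal to $\boldsymbol g^\alpha(\varepsilon)$, this is at least $|\partial_\alpha\boldsymbol{\mathfrak m}_\varepsilon\otimes\boldsymbol g^\alpha(\varepsilon)|^2$. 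The map $\partial_\alpha\boldsymbol{\mathfrak m}_\varepsilon\otimes\boldsymbol g^\alpha(\varepsilon)\sqrt[4]{g(\varepsilon)}$ converges weakly in $L^2$ to $\nabla_S\boldsymbol m_S\,\sqrt[4]{a}$. Weak lower semicontinuity of the $L^2$ norm, together with the fact that the limit is independent of $x_3$ (integration over $(-1,1)$ produces the factor $2$ that cancels the $\frac12$), yields \eqref{sec5-eqn7} via \eqref{sec5-eqn1}--\eqref{sec5-eqn2}.

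For \eqref{sec5-eqn8} I will use the dual variational characterizations of the magnetostatic energy. Testing \eqref{sec4-eqn1} with $\varphi=v_{\varepsilon,\boldsymbol m}$ gives
\[
\frac{\mu_0}{2}\int|\nabla v|^2=\sup_{\varphi}\Bigl\{\int_{\widehat\Omega^\varepsilon}\widehat{\boldsymbol m}_\varepsilon\cdot\nabla\varphi-\frac{\mu_0}2\int_{\mathbb R^3}|\nabla\varphi|^2\Bigr\}=\frac1{2\mu_0}\min_{\operatorname{div}\boldsymbol\sigma=0}\|\chi_{\widehat\Omega^\varepsilon}\widehat{\boldsymbol m}_\varepsilon-\boldsymbol\sigma\|^2_{L^2(\mathbb R^3)} .
\]
\textbf{Lower bound.} Take $\psi\in\mathcal D(\omega)$ and test with $\varphi(\boldsymbol\Theta(y,\varepsilon t))=\varepsilon t\,\psi(y)/\mu_0$ for $|t|\le1$, extended outside the shell by $\pm\varepsilon\psi/\mu_0$ in a layer of width $O(1)$ in the normal direction and cut off beyond it. The gradient is $\psi\boldsymbol a_3/\mu_0+O(\varepsilon)$ inside the shell and $O(\varepsilon)$ on a set of volume $O(1)$ outside. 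Dividing by $\varepsilon$ and using the strong convergence $\boldsymbol{\mathfrak m}_\varepsilon\cdot\boldsymbol a_3=m_3(\varepsilon)\to m_3$ gives
\[
\liminf\ge \frac1{\mu_0}\int_\omega\Bigl(2m_3\psi-\psi^2\Bigr)\sqrt a\,\mathrm{d}y .
\]
Density of $\psi$ in $L^2$ with $\psi\to m_3$ then gives $\frac1{\mu_0}\int_\omega m_3^2\sqrt a$.

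\textbf{Upper bound.} I plan to use $\boldsymbol\sigma=\chi_{\widehat\Omega^\varepsilon}(\widehat{\boldsymbol m}_\varepsilon-(\widehat{\boldsymbol m}_\varepsilon\cdot\boldsymbol a_3)\boldsymbol a_3)$ plus a corrector $\boldsymbol\rho_\varepsilon$ restoring $\operatorname{div}=0$. The distributional divergence of the tangential part consists of a bulk term, bounded in $L^2$ because of the $H^1$ bound, and lateral-boundary jumps on $\gamma\times(-\varepsilon,\varepsilon)$ of size $O(1)$ on area $O(\varepsilon)$. I would then show $\varepsilon^{-1}\|\boldsymbol\rho_\varepsilon\|^2_{L^2}\to0$ by solving $\operatorname{div}\boldsymbol\rho_\varepsilon=$ (those sources) with $\boldsymbol\rho_\varepsilon=\nabla w$, and estimating $w$ in the $\dot H^{-1}$ norm. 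For thin sources this norm is $o(\varepsilon^{1/2})$; the $\frac1\varepsilon\partial_3$ bound controls the transverse variation of $\widehat{\boldsymbol m}_\varepsilon$. This step is where I expect the main difficulty, especially the lateral boundary contribution. If a direct estimate fails, I would first approximate $\boldsymbol m$ by smooth fields compactly supported inside (mollify and cut off tangentially), exploiting Lipschitz continuity of the magnetostatic energy in $L^2(\widehat\Omega^\varepsilon)$ scaled by $\varepsilon^{-1/2}$. Once $\limsup=\liminf$, the full limit in \eqref{sec5-eqn8} follows.

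For the converse, set $m^\varepsilon_i=\boldsymbol m_S\cdot\boldsymbol g_i(\varepsilon)$. Then $m^\varepsilon_i\boldsymbol g^i(\varepsilon)=\boldsymbol m_S$ exactly, so
\[
g^{ij}(\varepsilon)m^\varepsilon_im^\varepsilon_j=|\boldsymbol m_S|^2=1,
\]
and hence $\boldsymbol m^\varepsilon\in\boldsymbol M(\varepsilon;\Omega)$. Moreover $\boldsymbol g_i(\varepsilon)\to\boldsymbol a_i$ in $\mathcal C^1$, which gives $\boldsymbol m^\varepsilon\to\boldsymbol m$ in $H^1$. Since $\boldsymbol{\mathfrak m}_\varepsilon=\boldsymbol m_S$ is independent of $x_3$, the transverse term vanishes identically. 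The exchange integrand then equals $|\partial_\alpha\boldsymbol m_S\otimes\boldsymbol g^\alpha(\varepsilon)|^2\sqrt{g(\varepsilon)}$, which converges by dominated convergence, so equality holds in \eqref{sec5-eqn7}. Finally, \eqref{sec5-eqn8} for this sequence is a special case of the general statement already proved.
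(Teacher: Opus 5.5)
Your arguments for \eqref{sec5-eqn7} and for the converse part follow the paper's proof. You also correctly supply the details the paper leaves implicit: the identity $\partial_\alpha\boldsymbol{\mathfrak m}_\varepsilon=m_{p\|\alpha}(\varepsilon;\boldsymbol m(\varepsilon))\boldsymbol g^p(\varepsilon)$, the orthogonality $\boldsymbol g^3(\varepsilon)=\boldsymbol a_3\perp\boldsymbol g^\alpha(\varepsilon)$ used to drop the transverse term, and weak lower semicontinuity.

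For \eqref{sec5-eqn8} you take a genuinely different route. The paper never splits into two bounds. It rescales the stray field to $\boldsymbol D_\varepsilon=-\mu_0\nabla v_{\varepsilon,\boldsymbol m(\varepsilon)}\circ\boldsymbol\Psi_\varepsilon$, which is bounded in $L^2(\omega\times\mathbb R)$ because $\mu_0\|\nabla v\|_{L^2(\mathbb R^3)}\le\|\widehat{\boldsymbol m}_\varepsilon\|_{L^2(\widehat\Omega^\varepsilon)}$. It identifies the weak limit as $-m_3\boldsymbol a_3\chi_{(-1,1)}(s)$ from the curl-free relation and the weak Maxwell equation; in each case a function independent of $s$ and square integrable on $\omega\times\mathbb R$ must vanish. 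It then concludes with $\frac{\mu_0}{2\varepsilon}\int_{\mathbb R^3}|\nabla v_{\varepsilon,\boldsymbol m(\varepsilon)}|^2\,\mathrm d\xi=-\frac1{2\mu_0}\int_\Omega\boldsymbol{\mathfrak m}_\varepsilon\cdot\boldsymbol D_\varepsilon\sqrt{g(\varepsilon)}\,\mathrm dx$. This is linear in $\boldsymbol D_\varepsilon$, so weak--strong convergence gives the exact limit in one step. No competitor has to be built, and the lateral boundary never enters, since all test functions are compactly supported in $\omega\times\mathbb R$. Your duality argument is more explicit and classical. Its liminf half, using the test field $\varepsilon t\psi/\mu_0$ in the $\sup$ formula, is complete and correct.

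Your limsup half, however, is only a plan. You correctly reduce it to $\varepsilon^{-1}\|\boldsymbol\rho_\varepsilon\|_{L^2(\mathbb R^3)}^2\to0$, but you do not prove this and leave the lateral term open. It does close, as follows.

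\emph{Direct estimate.} With $\boldsymbol\tau_\varepsilon$ the tangential part, $\|\boldsymbol\rho_\varepsilon\|=\sup\{\int_{\widehat\Omega^\varepsilon}\boldsymbol\tau_\varepsilon\cdot\nabla\phi:\ \|\nabla\phi\|_{L^2(\mathbb R^3)}\le1\}$. You may subtract from $\phi$ its mean over a fixed tubular neighbourhood $U=\boldsymbol\Theta(\omega\times(-\varepsilon_0,\varepsilon_0))$, then integrate by parts:
\begin{itemize}
\item The upper and lower faces carry no flux, since their normals are $\pm\boldsymbol a_3$.
\item The bulk term is bounded by $\|\operatorname{div}\boldsymbol\tau_\varepsilon\|_{L^2(\widehat\Omega^\varepsilon)}\|\phi\|_{L^2(\widehat\Omega^\varepsilon)}\le C\varepsilon^{1/2}\cdot C\varepsilon^{1/2}$. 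The transverse derivative does not even enter, because $\boldsymbol\tau_\varepsilon\cdot\boldsymbol a_3=0$ and $\boldsymbol a_3$ does not depend on $x_3$.
\item The lateral face has area $O(\varepsilon)$, and traces of $H^1(U)$ lie in $L^4$ of the fixed surface $\boldsymbol\Theta(\gamma\times(-\varepsilon_0,\varepsilon_0))$. Hence $\int_{\text{lateral}}|\phi|\le C\varepsilon^{3/4}$.
\end{itemize}
Therefore $\|\boldsymbol\rho_\varepsilon\|=O(\varepsilon^{3/4})=o(\varepsilon^{1/2})$, which gives $\limsup\le\frac1{\mu_0}\int_\omega m_3^2\sqrt a\,\mathrm dy$.

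\emph{Your fallback.} This is simpler still and deserves to be the main argument. The map $\widehat{\boldsymbol m}\mapsto\nabla v$ is linear with the $L^2$ bound above. So the square root of the scaled magnetostatic energy is Lipschitz in $\boldsymbol{\mathfrak m}_\varepsilon\in L^2(\Omega)$, uniformly in $\varepsilon$. Since $\boldsymbol{\mathfrak m}_\varepsilon\to\boldsymbol m_S$ strongly in $L^2(\Omega)$, you may replace it in the upper bound by a fixed smooth field with compact support in $\omega$. For that field there is no lateral flux at all, and only the bulk estimate is needed.

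With either completion your proof of \eqref{sec5-eqn8} is valid.
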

		
		\begin{proof}
			For the reader's convenience, we divide the proof into three parts.
			
			\smallskip
			
			\noindent (i) The inequality \eqref{sec5-eqn7} is a consequence of Lemmas \ref{sec5-lem1} and \ref{sec4:lem1} together with identities \eqref{sec5-eqn1}--\eqref{sec5-eqn2}.
			
			\smallskip
			
			\noindent (ii) We next prove \eqref{sec5-eqn8}. Set
			\[
			\boldsymbol d_\varepsilon
			:=-\mu_0\nabla
			v_{\varepsilon,\boldsymbol m(\varepsilon)}.
			\]
			We adapt the method of the proof of \cite[Lemma~1]{DiF20}.
			Testing the Maxwell equation \eqref{sec4-eqn1} with
			$v_{\varepsilon,\boldsymbol m(\varepsilon)}$, we deduce that
			\[
			\|\boldsymbol d_\varepsilon\|_{L^2(\mathbb R^3)}^2
			=-\int_{\widehat\Omega^\varepsilon}
			\widehat{\boldsymbol m}_\varepsilon\cdot
			\boldsymbol d_\varepsilon\,\mathrm{d} \xi,
			\]
			where $\widehat{\boldsymbol m}_\varepsilon$ is given by \eqref{sec4-eqn0}. H\"older's inequality then implies
			\[
			\|\boldsymbol d_\varepsilon\|_{L^2(\mathbb R^3)}
			\leq
			\|\widehat{\boldsymbol m}_\varepsilon
			\|_{L^2(\widehat\Omega^\varepsilon)}.
			\]
			Since $|\widehat{\boldsymbol m}_\varepsilon|=1$ almost everywhere,
			it follows that
			\[
			\frac1\varepsilon
			\|\boldsymbol d_\varepsilon\|_{L^2(\mathbb R^3)}^2
			\leq
			\frac1\varepsilon|\widehat\Omega^\varepsilon|
			=\int_\Omega\sqrt{g(\varepsilon)}\,\mathrm{d} x
			\leq C.
			\]
			
			Let
			$\varepsilon_0$ be given in Lemma \ref{sec4:lem1}. Define
			\[
			J_\varepsilon
			:=\left(-\frac{\varepsilon_0}{\varepsilon},
			\frac{\varepsilon_0}{\varepsilon}\right),
			\qquad
			\boldsymbol\Psi_\varepsilon(y,s)
			:=\boldsymbol\theta(y)+\varepsilon s\boldsymbol a_3(y),
			\quad (y,s)\in\omega\times J_\varepsilon.
			\]
			We use the same notation
			$\boldsymbol g_i(\varepsilon;y,s)$,
			$\boldsymbol g^i(\varepsilon;y,s)$, and
			$g(\varepsilon;y,s)$ for the natural extensions of the geometrical
			coefficients from $(-1,1)$ to $J_\varepsilon$. The change of variables
			$\xi=\boldsymbol\Psi_\varepsilon(y,s)$ gives
			\[
			\mathrm{d} \xi
			=\varepsilon\sqrt{g(\varepsilon;y,s)}\,
			\mathrm{d} y\,\mathrm{d} s.
			\]
			On every fixed set $\omega\times(-R,R)$,
			\[
			\boldsymbol g_i(\varepsilon;y,s)\to\boldsymbol a_i(y),
			\qquad
			\boldsymbol g^i(\varepsilon;y,s)\to\boldsymbol a^i(y),
			\qquad
			\sqrt{g(\varepsilon;y,s)}\to\sqrt{a(y)}
			\]
			uniformly as $\varepsilon\to0$.
			
			Let
			\[
			\boldsymbol D_\varepsilon(y,s)
			:=\boldsymbol d_\varepsilon
			\big(\boldsymbol\Psi_\varepsilon(y,s)\big),
			\qquad (y,s)\in\omega\times J_\varepsilon,
			\]
			and extend $\boldsymbol D_\varepsilon$ by zero to
			$\omega\times\mathbb R$. The preceding estimate, the change of
			variables, and the uniform lower bound for the volume density give
			\[
			\int_{\omega\times\mathbb R}
			|\boldsymbol D_\varepsilon|^2\,
			\mathrm{d} y\,\mathrm{d} s
			\leq
			\frac C\varepsilon
			\|\boldsymbol d_\varepsilon\|_{L^2(\mathbb R^3)}^2
			\leq C.
			\]
			Thus, up to a subsequence,
			\[
			\boldsymbol D_\varepsilon
			\rightharpoonup\boldsymbol D_0
			\quad\text{weakly in }
			(L^2(\omega\times\mathbb R))^3.
			\]
			
			We first identify the tangential component of the limit. Let
			\[
			w_\varepsilon
			:=v_{\varepsilon,\boldsymbol m(\varepsilon)}
			\circ\boldsymbol\Psi_\varepsilon.
			\]
			Since $\boldsymbol d_\varepsilon=-\mu_0\nabla
			v_{\varepsilon,\boldsymbol m(\varepsilon)}$, we have
			\[
			\boldsymbol D_\varepsilon\cdot
			\boldsymbol g_\alpha(\varepsilon)
			=-\mu_0\partial_\alpha w_\varepsilon,
			\qquad
			\boldsymbol D_\varepsilon\cdot\boldsymbol a_3
			=-\frac{\mu_0}{\varepsilon}\partial_s w_\varepsilon.
			\]
			Equality of the mixed derivatives of $w_\varepsilon$ therefore gives
			\[
			\partial_s\big(
			\boldsymbol D_\varepsilon\cdot
			\boldsymbol g_\alpha(\varepsilon)\big)
			=\varepsilon\partial_\alpha\big(
			\boldsymbol D_\varepsilon\cdot\boldsymbol a_3\big)
			\quad\text{in }\mathcal D'(\omega\times J_\varepsilon).
			\]
			For any test function compactly supported in
			$\omega\times\mathbb R$, its support is contained in
			$\omega\times J_\varepsilon$ for all sufficiently small
			$\varepsilon$. Passing to the limit in the above identity gives
			\[
			\partial_s(\boldsymbol D_0\cdot\boldsymbol a_\alpha)=0
			\quad\text{in }\mathcal D'(\omega\times\mathbb R).
			\]
			Hence $\boldsymbol D_0\cdot\boldsymbol a_\alpha$ is independent of
			$s$. Since this function belongs to
			$L^2(\omega\times\mathbb R)$, it must be zero. Therefore
			\[
			\boldsymbol D_0\cdot\boldsymbol a_\alpha=0,
			\qquad \alpha=1,2.
			\]
			
			We now identify the normal component. The Maxwell equation can be
			written as
			\[
			\operatorname{div}\big(
			\boldsymbol d_\varepsilon
			+\chi_{\widehat\Omega^\varepsilon}
			\widehat{\boldsymbol m}_\varepsilon\big)=0
			\quad\text{in }\mathcal D'(\mathbb R^3).
			\]
			Let $\psi\in\mathcal D(\omega\times\mathbb R)$. For sufficiently
			small $\varepsilon$, define the test function
			$\varphi_\varepsilon$ by
			\[
			\varphi_\varepsilon
			\big(\boldsymbol\Psi_\varepsilon(y,s)\big)
			:=\varepsilon\psi(y,s).
			\]
			Since $\psi$ has compact support, this function can be extended by zero
			to a function in $L^{1,2}(\mathbb R^3)$. Moreover,
			\[
			\nabla\varphi_\varepsilon
			\circ\boldsymbol\Psi_\varepsilon
			=\varepsilon\partial_\alpha\psi\,
			\boldsymbol g^\alpha(\varepsilon)
			+\partial_s\psi\,\boldsymbol a_3.
			\]
			Substituting this test function in the weak Maxwell equation, changing
			variables, and dividing by $\varepsilon$ yield
			\begin{align*}
				0={}&\int_{\omega\times\mathbb R}
				\Big[
				\boldsymbol D_\varepsilon
				+\chi_{(-1,1)}(s)
				\boldsymbol{\mathfrak m}_\varepsilon(y,s)
				\Big]\cdot\boldsymbol a_3\,
				\partial_s\psi\sqrt{g(\varepsilon;y,s)}\,
				\mathrm{d} y\,\mathrm{d} s\\
				&+\varepsilon\int_{\omega\times\mathbb R}
				\Big[
				\boldsymbol D_\varepsilon
				+\chi_{(-1,1)}(s)
				\boldsymbol{\mathfrak m}_\varepsilon(y,s)
				\Big]\cdot\boldsymbol g^\alpha(\varepsilon;y,s)\,
				\partial_\alpha\psi\sqrt{g(\varepsilon;y,s)}\,
				\mathrm{d} y\,\mathrm{d} s.
			\end{align*}
			Here $\boldsymbol{\mathfrak m}_\varepsilon$ is extended by zero for
			$s\notin(-1,1)$. The second integral tends to zero. In the first one,
			we use the weak convergence of $\boldsymbol D_\varepsilon$, the strong
			$L^2$ convergence of $\boldsymbol{\mathfrak m}_\varepsilon$ to
			$\boldsymbol m_S$, and the uniform convergence of the geometrical
			coefficients on the support of $\psi$. We obtain
			\[
			\int_{\omega\times\mathbb R}
			\big(
			\boldsymbol D_0\cdot\boldsymbol a_3
			+m_3\chi_{(-1,1)}
			\big)\partial_s\psi\sqrt a\,
			\mathrm{d} y\,\mathrm{d} s=0.
			\]
			Thus
			$\boldsymbol D_0\cdot\boldsymbol a_3
			+m_3\chi_{(-1,1)}$ is independent of $s$. It belongs to
			$L^2(\omega\times\mathbb R)$ and is consequently zero. Combining the
			tangential and normal identities gives
			\[
			\boldsymbol D_0(y,s)
			=-m_3(y)\boldsymbol a_3(y)\chi_{(-1,1)}(s).
			\]
			Since every weakly convergent subsequence converges to the same limit,
			the whole sequence converges weakly to $\boldsymbol D_0$.
			
			Finally, testing \eqref{sec4-eqn1} by
			$v_{\varepsilon,\boldsymbol m(\varepsilon)}$ and using
			$\boldsymbol d_\varepsilon=-\mu_0\nabla
			v_{\varepsilon,\boldsymbol m(\varepsilon)}$, we obtain
			\[
			\frac{\mu_0}{2\varepsilon}
			\int_{\mathbb R^3}
			|\nabla v_{\varepsilon,\boldsymbol m(\varepsilon)}|^2
			\,\mathrm{d} \xi
			=-\frac1{2\mu_0\varepsilon}
			\int_{\widehat\Omega^\varepsilon}
			\widehat{\boldsymbol m}_\varepsilon\cdot
			\boldsymbol d_\varepsilon\,\mathrm{d} \xi.
			\]
			Changing variables in the integral on the right-hand side gives
			\[
			\frac{\mu_0}{2\varepsilon}
			\int_{\mathbb R^3}
			|\nabla v_{\varepsilon,\boldsymbol m(\varepsilon)}|^2
			\,\mathrm{d} \xi
			=-\frac1{2\mu_0}
			\int_\Omega
			\boldsymbol{\mathfrak m}_\varepsilon\cdot
			\boldsymbol D_\varepsilon
			\sqrt{g(\varepsilon)}\,\mathrm{d} x.
			\]
			We can now pass to the limit to obtain
			\begin{align*}
				\lim_{\varepsilon\to0}
				\frac{\mu_0}{2\varepsilon}
				\int_{\mathbb R^3}
				|\nabla v_{\varepsilon,\boldsymbol m(\varepsilon)}|^2
				\,\mathrm{d} \xi
				&=-\frac1{2\mu_0}
				\int_{\omega\times(-1,1)}
				\boldsymbol m_S\cdot
				\big(-m_3\boldsymbol a_3\big)\sqrt a\,
				\mathrm{d} y\,\mathrm{d} s\\
				&=\frac1{\mu_0}
				\int_\omega m_3^2\sqrt a\,\mathrm{d} y,
			\end{align*}
			and \eqref{sec5-eqn8} follows. 
			
			\smallskip
			
			\noindent (iii) It remains to verify the recovery sequence. Fix
			$\boldsymbol m\in\boldsymbol M_S(\omega)$ and let
			$\boldsymbol m^\varepsilon$ be defined by \eqref{sec5-eqn9}. More
			explicitly, for $(y,x_3)\in\Omega$,
			\[
			m_i^\varepsilon(y,x_3)
			=\big(m_j(y)\boldsymbol a^j(y)\big)\cdot
			\boldsymbol g_i(\varepsilon)(y,x_3).
			\]
			Hence, it follows that
			\begin{align*}
				m_i^\varepsilon(y,x_3)
				\boldsymbol g^i(\varepsilon)(y,x_3)
				&=\Big[
				\big(m_j(y)\boldsymbol a^j(y)\big)\cdot
				\boldsymbol g_i(\varepsilon)(y,x_3)
				\Big]\boldsymbol g^i(\varepsilon)(y,x_3)\\
				&=m_j(y)\boldsymbol a^j(y)
				=\boldsymbol m_S(y).
			\end{align*}
			Notice that
			\[
			g^{ij}(\varepsilon)(y,x_3)
			m_i^\varepsilon(y,x_3)m_j^\varepsilon(y,x_3)
			=|\boldsymbol m_S(y)|^2=1,
			\]
			and therefore
			$\boldsymbol m^\varepsilon\in\boldsymbol M(\varepsilon;\Omega)$.
			Since
			$\boldsymbol g_i(\varepsilon)\to\boldsymbol a_i$ in $\mathcal{C}^1(\overline{\Omega};\mathbb{E}^3)$, we deduce that, for each $i$,
			\[
			m_i^\varepsilon
			=\big(m_j\boldsymbol a^j\big)\cdot
			\boldsymbol g_i(\varepsilon)
			\longrightarrow
			\big(m_j\boldsymbol a^j\big)\cdot\boldsymbol a_i
			=m_i
			\quad\text{strongly in }H^1(\Omega),
			\]
			and consequently
			\[
			\boldsymbol m^\varepsilon\longrightarrow\boldsymbol m
			\quad\text{strongly in }(H^1(\Omega))^3.
			\]
			
			Since
			$m_p^\varepsilon=\boldsymbol m_S\cdot\boldsymbol g_p(\varepsilon)$
			and
			\begin{align*}
				\partial_\alpha\boldsymbol g_p(\varepsilon)
				&=\Gamma_{\alpha p}^{q}(\varepsilon)
				\boldsymbol g_q(\varepsilon),\\
				\frac1\varepsilon\partial_3\boldsymbol g_p(\varepsilon)
				&=\Gamma_{3p}^{q}(\varepsilon)
				\boldsymbol g_q(\varepsilon),
			\end{align*}
			we have, almost everywhere in $\Omega$,
			\begin{align*}
				m_{p\|\alpha}(\varepsilon;\boldsymbol m^\varepsilon)(y,x_3)
				&={}\partial_\alpha
				\big(\boldsymbol m_S(y)\cdot
				\boldsymbol g_p(\varepsilon)(y,x_3)\big)
				-\Gamma_{\alpha p}^{q}(\varepsilon)(y,x_3)
				\big(\boldsymbol m_S(y)\cdot
				\boldsymbol g_q(\varepsilon)(y,x_3)\big)\\
				&={}\partial_\alpha\boldsymbol m_S(y)\cdot
				\boldsymbol g_p(\varepsilon)(y,x_3),
			\end{align*}
			and
			\begin{align*}
				m_{p\|3}(\varepsilon;\boldsymbol m^\varepsilon)(y,x_3)
				&={}\frac1\varepsilon\partial_3
				\big(\boldsymbol m_S(y)\cdot
				\boldsymbol g_p(\varepsilon)(y,x_3)\big)
				-\Gamma_{3p}^{q}(\varepsilon)(y,x_3)
				\big(\boldsymbol m_S(y)\cdot
				\boldsymbol g_q(\varepsilon)(y,x_3)\big)\\
				&={}0.
			\end{align*}
			Therefore
			\begin{align*}
				&g^{pq}(\varepsilon)(y,x_3)
				m_{p\|\alpha}(\varepsilon;\boldsymbol m^\varepsilon)(y,x_3)
				m_{q\|\beta}(\varepsilon;\boldsymbol m^\varepsilon)(y,x_3)\\
				&\qquad={}
				\partial_\alpha\boldsymbol m_S(y)\cdot
				\partial_\beta\boldsymbol m_S(y).
			\end{align*}
			Since $g^{\alpha3}(\varepsilon)=0$, the exchange energy is consequently
			\[
			\frac12\int_\Omega
			g^{\alpha\beta}(\varepsilon)(y,x_3)
			\partial_\alpha\boldsymbol m_S(y)
			\cdot
			\partial_\beta\boldsymbol m_S(y)
			\sqrt{g(\varepsilon)(y,x_3)}\,\mathrm{d} x.
			\]
			The uniform convergence of the geometrical coefficients and integration
			over $(-1,1)$ show that this expression converges to
			\[
			\int_\omega
			|\nabla_S\boldsymbol m_S|^2\sqrt a\,\mathrm{d} y.
			\]
			Thus equality is attained in \eqref{sec5-eqn7}. Finally, the pullback
			of the associated physical magnetization is exactly
			$\boldsymbol m_S(y)$ and is independent of $x_3$.
			Therefore the result proved in part (ii) applies directly and yields
			\eqref{sec5-eqn8}, completing the proof.
		\end{proof}
		\begin{lem}[Lower bound]
			\label{sec5-lem3}
			Let
			$(\boldsymbol v(\varepsilon),\boldsymbol m(\varepsilon))$
			converge weakly in $\boldsymbol{\mathcal X}$ to
			$(\boldsymbol v,\boldsymbol m)$. Then the lower bound inequality
			\eqref{sec5-eqn4} holds.
		\end{lem}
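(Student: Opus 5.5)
We may assume the right-hand side of \eqref{sec5-eqn4} is finite; otherwise there is nothing to prove. Passing to a subsequence realizing the $\liminf$, we may also assume $(\boldsymbol v(\varepsilon),\boldsymbol m(\varepsilon))\in\boldsymbol{\mathcal A}(\varepsilon;\Omega)$ with $\sup_\varepsilon \mathcal G(\varepsilon;\boldsymbol v(\varepsilon),\boldsymbol m(\varepsilon))<\infty$. Lemma \ref{sec5-lem1} then gives a further subsequence and $(\boldsymbol\eta,\boldsymbol n)\in\boldsymbol{\mathcal A}_M(\omega)$ with the convergences \eqref{sec5-eqn3}. Since weak limits are unique, $\boldsymbol v=\boldsymbol\eta$ and $\boldsymbol m=\boldsymbol n$, so $\overline{\mathcal G}_0(\boldsymbol v,\boldsymbol m)=\mathcal G_M(\boldsymbol\eta,\boldsymbol n)$. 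We then treat the energy term by term.

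The loading terms converge. Indeed, $\sqrt{g(\varepsilon)}\to\sqrt a$ uniformly, $v_i(\varepsilon)\rightharpoonup\eta_i$ weakly in $L^2(\Omega)$, and $m_i(\varepsilon)\to n_i$ strongly; integrating in $x_3$ produces $p^i$ and $q^i$. The exchange and magnetostatic terms are handled by \eqref{sec5-eqn7} and \eqref{sec5-eqn8} of Lemma \ref{sec5-lem2}. Since $\liminf(a_\varepsilon+b_\varepsilon)\ge\liminf a_\varepsilon+\lim b_\varepsilon$, it remains only to prove the lower bound for the elastic term.

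For the elastic term, the estimate in the proof of Lemma \ref{sec5-lem1} shows that $E_{i\|j}(\varepsilon):=E_{i\|j}(\varepsilon;\boldsymbol v(\varepsilon),\boldsymbol m(\varepsilon))$ is bounded in $L^2(\Omega)$. We extract $E_{i\|j}(\varepsilon)\rightharpoonup E_{ij}$ weakly in $L^2(\Omega)$ and identify the tangential components. We have $\epsilon_{\alpha\|\beta}(\varepsilon;\boldsymbol v(\varepsilon))\rightharpoonup\gamma_{\alpha\beta}(\boldsymbol\eta)$, because $\partial_\beta v_\alpha(\varepsilon)$ converges weakly, $\Gamma^p_{\alpha\beta}(\varepsilon)\to\Gamma^\sigma_{\alpha\beta},b_{\alpha\beta}$ uniformly by Lemma \ref{sec4:lem1}, and $v_p(\varepsilon)$ converges weakly in $L^2$. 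Also $e_{\alpha\beta}(\varepsilon;\boldsymbol m(\varepsilon))\to e_{\alpha\beta}(\boldsymbol n)$ strongly, since $\boldsymbol m(\varepsilon)$ is bounded in $L^\infty$ and converges strongly in $L^2$. Hence $E_{\alpha\beta}=\mathcal E_{\alpha\beta}(\boldsymbol\eta,\boldsymbol n)$, which is independent of $x_3$.

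The transverse components $E_{\alpha3}$ and $E_{33}$ remain unidentified, and this is the main step. Rather than identifying them, we eliminate them. By Lemma \ref{sec4:lem1}, $A^{ijk\ell}(\varepsilon)\to A^{ijk\ell}(0)$ uniformly and $\sqrt{g(\varepsilon)}\to\sqrt a$ uniformly. Together with the convexity and weak lower semicontinuity of $t\mapsto\int A^{ijk\ell}(0)t_{k\ell}t_{ij}\sqrt a$, this gives
\[
\liminf_{\varepsilon\to0}\frac12\int_\Omega A^{ijk\ell}(\varepsilon)E_{k\|\ell}(\varepsilon)E_{i\|j}(\varepsilon)\sqrt{g(\varepsilon)}\,\mathrm{d}x\ \ge\ \frac12\int_\Omega A^{ijk\ell}(0)E_{k\ell}E_{ij}\sqrt a\,\mathrm{d}x .
\]
The error from replacing $A(\varepsilon)\sqrt{g(\varepsilon)}$ by $A(0)\sqrt a$ is $O(\varepsilon)$ times a bounded $L^2$ norm squared, so it vanishes in the limit.

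Using $A^{\alpha\beta\sigma3}(0)=A^{\alpha333}(0)=0$ and the explicit formulas in Lemma \ref{sec4:lem1}, the integrand equals
\[
A^{\alpha\beta\sigma\tau}(0)E_{\sigma\tau}E_{\alpha\beta}+2\lambda a^{\alpha\beta}E_{\alpha\beta}E_{33}+(\lambda+2\mu)E_{33}^2+4\mu a^{\alpha\sigma}E_{\alpha3}E_{\sigma3}.
\]
We drop the nonnegative $E_{\alpha3}$ term and minimize pointwise in $E_{33}$, which gives $E_{33}=-\frac{\lambda}{\lambda+2\mu}a^{\alpha\beta}E_{\alpha\beta}$. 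The minimum value is $\frac12 a^{\alpha\beta\sigma\tau}E_{\sigma\tau}E_{\alpha\beta}$. The factor $\frac12$ arises from $\lambda-\frac{\lambda^2}{\lambda+2\mu}=\frac{2\lambda\mu}{\lambda+2\mu}$ together with the convention that $a^{\alpha\beta\sigma\tau}$ carries the factor $2$ per unit thickness. Because $E_{\alpha\beta}$ is independent of $x_3$, integrating over $x_3\in(-1,1)$ contributes a factor $2$. This yields exactly $\frac12\int_\omega a^{\alpha\beta\sigma\tau}\mathcal E_{\sigma\tau}\mathcal E_{\alpha\beta}\sqrt a\,\mathrm{d}y$.

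Adding the four contributions gives \eqref{sec5-eqn4}. The main point to check carefully is the bookkeeping of constants in this last step: the pointwise minimization must produce precisely the tensor $a^{\alpha\beta\sigma\tau}$ with the stated normalization.
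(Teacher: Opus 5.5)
Your proposal is correct and follows essentially the paper's argument: identify the weak limit $\mathcal E_{\alpha\beta}(\boldsymbol\eta,\boldsymbol m)$ of the tangential strains, eliminate the transverse components by the plane-stress pointwise minimization, use weak lower semicontinuity, take the magnetic terms from Lemma \ref{sec5-lem2}, and pass to the limit directly in the loading terms. The only difference is the order of the elastic steps, and it does not matter: the paper minimizes at each fixed $\varepsilon$ via \eqref{sec5-eqn11} with $a^{\alpha\beta\sigma\tau}(\varepsilon)$ and then applies lower semicontinuity to the tangential strains only, whereas you first pass to weak limits of all $E_{i\|j}(\varepsilon)$ (legitimately bounded by the energy estimate) and minimize in the limit; your constant bookkeeping $\tfrac12\cdot\tfrac12\cdot 2=\tfrac12$ is right.
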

		
		\begin{proof}
			We may assume, after extracting a subsequence, that the right-hand side
			of \eqref{sec5-eqn4} is finite and is realized as a limit. Lemma
			\ref{sec5-lem1} then shows that
			$\boldsymbol v=\boldsymbol\eta$ and
			$\boldsymbol m=\boldsymbol m(y)$ are independent of $x_3$, with
			$(\boldsymbol\eta,\boldsymbol m)
			\in\boldsymbol{\mathcal A}_M(\omega)$.
			
			The strong $L^2$ convergence and the uniform bounds for the
			magnetization give
			\begin{equation*}
				e_{\alpha\beta}
				(\varepsilon;\boldsymbol m(\varepsilon))
				\to e_{\alpha\beta}(\boldsymbol m)
				\quad\text{strongly in }L^2(\Omega).
			\end{equation*}
			On the other hand, the convergences of the Christoffel symbols in Lemma
			\ref{sec4:lem1} imply
			\[
			\epsilon_{\alpha\|\beta}
			(\varepsilon;\boldsymbol v(\varepsilon))
			\rightharpoonup
			\gamma_{\alpha\beta}(\boldsymbol\eta)
			\quad\text{weakly in }L^2(\Omega).
			\]
			Consequently,
			\begin{equation}
				\label{sec5-eqn10}
				E_{\alpha\|\beta}
				(\varepsilon;\boldsymbol v(\varepsilon),
				\boldsymbol m(\varepsilon))
				\rightharpoonup
				\mathcal E_{\alpha\beta}(\boldsymbol\eta,\boldsymbol m)
				\quad\text{weakly in }L^2(\Omega).
			\end{equation}
			
			For a symmetric matrix $(T_{ij})$, minimizing the three-dimensional
			quadratic form with respect to $T_{\alpha3}$ and $T_{33}$ gives
			\begin{equation}
				\label{sec5-eqn11}
				\frac12A^{ijk\ell}(\varepsilon)T_{k\ell}T_{ij}
				\geq
				\frac14a^{\alpha\beta\sigma\tau}(\varepsilon)
				T_{\sigma\tau}T_{\alpha\beta},
			\end{equation}
			where
			\begin{align*}
				a^{\alpha\beta\sigma\tau}(\varepsilon)
				&:=
				\frac{4\lambda\mu}{\lambda+2\mu}
				g^{\alpha\beta}(\varepsilon)
				g^{\sigma\tau}(\varepsilon)\\
				&\quad+
				2\mu\big(
				g^{\alpha\sigma}(\varepsilon)g^{\beta\tau}(\varepsilon)
				+g^{\alpha\tau}(\varepsilon)g^{\beta\sigma}(\varepsilon)
				\big).
			\end{align*}
			Equality in \eqref{sec5-eqn11} holds precisely when
			\begin{equation}
				\label{sec5-eqn12}
				T_{\alpha3}=0,
				\qquad
				T_{33}
				=
				-\frac{\lambda}{\lambda+2\mu}
				g^{\alpha\beta}(\varepsilon)T_{\alpha\beta}.
			\end{equation}
			Using \eqref{sec5-eqn10}, the uniform convergence of the coefficients,
			and weak lower semicontinuity, we obtain
			\begin{align*}
				&\liminf_{\varepsilon\to0}
				\frac12\int_\Omega
				A^{ijk\ell}(\varepsilon)
				E_{k\|\ell}(\varepsilon)E_{i\|j}(\varepsilon)
				\sqrt{g(\varepsilon)}\,\mathrm{d} x\\
				&\hspace{15mm}\geq
				\frac12\int_\omega
				a^{\alpha\beta\sigma\tau}
				\mathcal E_{\sigma\tau}(\boldsymbol\eta,\boldsymbol m)
				\mathcal E_{\alpha\beta}(\boldsymbol\eta,\boldsymbol m)
				\sqrt a\,\mathrm{d} y.
			\end{align*}
			
			Lemma \ref{sec5-lem2} supplies the lower bound for the exchange energy
			and the limit of the magnetostatic energy. Finally,
			\begin{align*}
				\int_\Omega f^iv_i(\varepsilon)
				\sqrt{g(\varepsilon)}\,\mathrm{d} x
				&\longrightarrow
				\int_\omega p^i\eta_i\sqrt a\,\mathrm{d} y,\\
				\int_\Omega h^im_i(\varepsilon)
				\sqrt{g(\varepsilon)}\,\mathrm{d} x
				&\longrightarrow
				\int_\omega q^im_i\sqrt a\,\mathrm{d} y.
			\end{align*}
			The first convergence follows from the weak convergence of the
			displacement, while the second follows from the strong $L^2$
			convergence of the magnetization. Adding all these inequalities proves
			\eqref{sec5-eqn4}.
		\end{proof}
		
		\begin{lem}[Upper bound]
			\label{sec5-lem4}
			For every
			$(\boldsymbol v,\boldsymbol m)\in\boldsymbol{\mathcal X}$ there exists
			a sequence converging strongly in $\boldsymbol{\mathcal X}$ for which
			the upper-bound inequality \eqref{sec5-eqn5} holds.
		\end{lem}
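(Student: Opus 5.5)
If $\overline{\mathcal G}_0(\boldsymbol v,\boldsymbol m)=+\infty$ there is nothing to prove, and we take the constant sequence. So assume $\boldsymbol v=\boldsymbol\eta$ and $\boldsymbol m=\boldsymbol n$ are independent of $x_3$, with $(\boldsymbol\eta,\boldsymbol n)\in\boldsymbol{\mathcal A}_M(\omega)$.

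For the magnetization we use the recovery sequence $\boldsymbol m^\varepsilon$ of Lemma \ref{sec5-lem2}, defined by \eqref{sec5-eqn9}. It lies in $\boldsymbol M(\varepsilon;\Omega)$ and converges strongly in $(H^1(\Omega))^3$. Lemma \ref{sec5-lem2} already gives that its exchange energy converges to $\int_\omega|\nabla_S\boldsymbol n_S|^2\sqrt a$ and its magnetostatic energy to $\frac1{\mu_0}\int_\omega n_3^2\sqrt a$. Moreover $e_{ij}(\varepsilon;\boldsymbol m^\varepsilon)\to -n_in_j+\frac13 g_{ij}(0)$ strongly in $L^2$, and in $L^\infty$ as well. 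The magnetic loading term converges because $h^i\in L^2$ and the convergence is strong.

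The displacement is the real task, and we first treat a smooth case. Suppose $\eta_\alpha\in\mathcal D(\omega)$ and $\eta_3\in\mathcal D(\omega)$, and let $\boldsymbol n$ be fixed. We use the classical membrane ansatz
\[
v_\alpha(\varepsilon):=\eta_\alpha-\varepsilon x_3\partial_\alpha\eta_3+\varepsilon x_3\,2b^\sigma_\alpha\eta_\sigma\ (\text{or simply }\eta_\alpha+\varepsilon x_3\zeta_\alpha),\qquad v_3(\varepsilon):=\eta_3+\varepsilon x_3\zeta_3,
\]
with correctors $\zeta_i$ chosen so that the transverse strains approach the optimal values \eqref{sec5-eqn12}. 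The target for the transverse strains is
\[
E_{\alpha\|3}\to0,\qquad E_{3\|3}\to -\tfrac{\lambda}{\lambda+2\mu}a^{\alpha\beta}\mathcal E_{\alpha\beta}(\boldsymbol\eta,\boldsymbol n).
\]
With the ansatz, $\epsilon_{3\|3}=\zeta_3$ and $\epsilon_{\alpha\|3}=\frac12(\zeta_\alpha+\partial_\alpha\eta_3)+b^\sigma_\alpha\eta_\sigma+O(\varepsilon)$. The magnetic parts are $e_{33}=-n_3^2+\frac13$ and $e_{\alpha3}=-n_\alpha n_3$. We therefore set
\[
\zeta_3:=-\tfrac{\lambda}{\lambda+2\mu}a^{\alpha\beta}\mathcal E_{\alpha\beta}+n_3^2-\tfrac13,
\qquad
\zeta_\alpha:=-\partial_\alpha\eta_3-2b^\sigma_\alpha\eta_\sigma+2n_\alpha n_3 .
\]
The difficulty is that $\zeta_i$ involve $\boldsymbol n$, which is only $H^1\cap L^\infty$, and $\mathcal E$, which is only $L^2$. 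We must ensure $\boldsymbol v(\varepsilon)\in\boldsymbol V(\Omega)$ and that $\varepsilon x_3\partial_\alpha\zeta_i\to0$ in $L^2$. To handle this, we replace $\zeta_i$ by smooth approximations $\zeta_i^\delta\in\mathcal D(\omega)$ converging in $L^2$. The tangential strains are then $\gamma_{\alpha\beta}(\boldsymbol\eta)+O(\varepsilon\|\zeta^\delta\|_{H^1})$. With the uniform convergence of $A^{ijk\ell}(\varepsilon)$, $\sqrt{g(\varepsilon)}$ and $g^{ij}(\varepsilon)$ from Lemma \ref{sec4:lem1}, this gives
\[
\limsup_{\varepsilon\to0}\mathcal G(\varepsilon;\boldsymbol v^\delta(\varepsilon),\boldsymbol m^\varepsilon)\le \mathcal G_M(\boldsymbol\eta,\boldsymbol n)+C\|\zeta-\zeta^\delta\|_{L^2}^2 .
\]
Here the elastic integrand converges to $\frac12 A^{ijk\ell}(0)T_{k\ell}T_{ij}$ with $T$ the optimal matrix, which equals $\frac14 a^{\alpha\beta\sigma\tau}\mathcal E_{\sigma\tau}\mathcal E_{\alpha\beta}$ by the equality case of \eqref{sec5-eqn11}. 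Integrating over $x_3\in(-1,1)$ produces the factor $2$, matching $\frac12\int_\omega a^{\alpha\beta\sigma\tau}\mathcal E\mathcal E\sqrt a$.

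For general $(\boldsymbol\eta,\boldsymbol n)$ we argue by density. $\mathcal D(\omega)^3$ is dense in $\boldsymbol V_M(\omega)$, and $\mathcal G_M(\cdot,\boldsymbol n)$ is continuous on $\boldsymbol V_M(\omega)$ since it is quadratic in $\gamma_{\alpha\beta}(\boldsymbol\eta)\in L^2$ plus a linear term. We take $\boldsymbol\eta^k\to\boldsymbol\eta$ and $\delta_k\to0$, and combine with a diagonal argument (Attouch's lemma). This yields a sequence $\boldsymbol v(\varepsilon)$ converging to $\boldsymbol\eta$ in $\boldsymbol{\mathcal X}$, strongly because $\varepsilon x_3\zeta^\delta\to0$ in $L^2$, satisfying \eqref{sec5-eqn5}. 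Equivalently, one may invoke lower semicontinuity of the $\Gamma$-limsup together with this density. The mechanical loading term $\int f^iv_i(\varepsilon)\sqrt{g(\varepsilon)}\to\int_\omega p^i\eta_i\sqrt a$ by strong convergence.

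The main obstacle is the corrector step: handling the coupling of nonsmooth $\boldsymbol n$ through $e_{\alpha3}, e_{33}$ while keeping $\varepsilon\partial_\alpha$ of the correctors negligible and respecting the clamped boundary. Smoothing $\zeta$ by compactly supported functions and then taking the diagonal sequence should resolve it.
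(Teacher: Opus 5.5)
Your proposal is correct and follows essentially the same route as the paper. Both use smooth approximants $\boldsymbol\eta^n\in(\mathcal D(\omega))^3$, the normal-fibre magnetization $m_i^\varepsilon=\boldsymbol m_S\cdot\boldsymbol g_i(\varepsilon)$ from Lemma~\ref{sec5-lem2}, the ansatz $\eta_i+\varepsilon x_3 z_i$ with exactly the correctors of \eqref{sec5-eqn13} mollified in $\mathcal D(\omega)$ to respect the clamping, and a final diagonal argument. The only slip is in your first displayed ansatz, where $\varepsilon x_3\,2b^\sigma_\alpha\eta_\sigma$ should carry a minus sign; your subsequent definition of $\zeta_\alpha$ is the correct one.
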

		
		\begin{proof}
			It is enough to consider a pair
			$(\boldsymbol\eta,\boldsymbol m)
			\in\boldsymbol{\mathcal A}_M(\omega)$. By density, choose
			$\boldsymbol\eta^n=(\eta_i^n)\in(\mathcal D(\omega))^3$ such that
			\begin{equation*}
				\eta_\alpha^n\to\eta_\alpha
				\quad\text{strongly in }H^1(\omega),
				\qquad
				\eta_3^n\to\eta_3
				\quad\text{strongly in }L^2(\omega).
			\end{equation*}
			Define
			\begin{equation*}
				F_{\alpha\beta}^n
				:=
				\gamma_{\alpha\beta}(\boldsymbol\eta^n)
				-m_\alpha m_\beta+\frac13a_{\alpha\beta}.
			\end{equation*}
			Then
			\begin{equation*}
				F_{\alpha\beta}^n
				\to
				\mathcal E_{\alpha\beta}(\boldsymbol\eta,\boldsymbol m)
				\quad\text{strongly in }L^2(\omega).
			\end{equation*}
			
			The transverse scaled derivatives that realize the plane-stress
			minimum corresponding to $F_{\alpha\beta}^n$ are
			\begin{align}
				\label{sec5-eqn13}
				z_\alpha^n
				&:=
				2m_\alpha m_3-\partial_\alpha\eta_3^n
				-2b_\alpha^\sigma\eta_\sigma^n,\\
				z_3^n
				&:=
				m_3^2-\frac13
				-\frac{\lambda}{\lambda+2\mu}
				a^{\alpha\beta}F_{\alpha\beta}^n.
				\nonumber
			\end{align}
			In general, $\boldsymbol z^n$ belongs only to
			$(L^2(\omega))^3$ and does not have zero trace. We therefore choose
			$\boldsymbol z^{n,k}\in(\mathcal D(\omega))^3$ such that
			\begin{equation}
				\label{sec5-eqn14}
				\boldsymbol z^{n,k}\to\boldsymbol z^n
				\quad\text{strongly in }(L^2(\omega))^3
				\quad\text{as }k\to\infty.
			\end{equation}
			
			For fixed $n$ and $k$, define
			\begin{equation*}
				v_i^{\varepsilon,n,k}(y,x_3)
				:=
				\eta_i^n(y)+\varepsilon x_3z_i^{n,k}(y).
			\end{equation*}
			Since both $\boldsymbol\eta^n$ and $\boldsymbol z^{n,k}$ have compact
			support in $\omega$, one has
			$\boldsymbol v^{\varepsilon,n,k}\in\boldsymbol V(\Omega)$. For the
			magnetization, use the normal-fibre extension
			\begin{equation*}
				m_i^\varepsilon(y,x_3)
				:=
				\boldsymbol m_S(y)\cdot
				\boldsymbol g_i(\varepsilon)(y,x_3).
			\end{equation*}
			By Lemma \ref{sec5-lem2},
			$\boldsymbol m^\varepsilon
			\in\boldsymbol M(\varepsilon;\Omega)$ and
			$\boldsymbol m^\varepsilon\to\boldsymbol m$ strongly in
			$(H^1(\Omega))^3$.
			
			Lemma \ref{sec4:lem1} and the definitions of the scaled strains imply,
			for fixed $n$ and $k$,
			\begin{align}
				\label{sec5-eqn15}
				E_{\alpha\|\beta}
				(\varepsilon;\boldsymbol v^{\varepsilon,n,k},
				\boldsymbol m^\varepsilon)
				&\to F_{\alpha\beta}^n,\nonumber\\
				E_{\alpha\|3}
				(\varepsilon;\boldsymbol v^{\varepsilon,n,k},
				\boldsymbol m^\varepsilon)
				&\to
				\frac12\big(z_\alpha^{n,k}+\partial_\alpha\eta_3^n\big)
				+b_\alpha^\sigma\eta_\sigma^n-m_\alpha m_3,\\
				E_{3\|3}
				(\varepsilon;\boldsymbol v^{\varepsilon,n,k},
				\boldsymbol m^\varepsilon)
				&\to z_3^{n,k}-m_3^2+\frac13
				\nonumber
			\end{align}
			strongly in $L^2(\Omega)$. Letting first $\varepsilon\to0$ and then
			$k\to\infty$, relations \eqref{sec5-eqn13}--\eqref{sec5-eqn14} show that
			the last two limits in \eqref{sec5-eqn15} become
			\[
			0
			\qquad\text{and}\qquad
			-\frac{\lambda}{\lambda+2\mu}
			a^{\alpha\beta}F_{\alpha\beta}^n,
			\]
			respectively. These are exactly the equality conditions in
			\eqref{sec5-eqn12}. Hence
			\begin{align}
				\label{sec5-eqn16}
				&\lim_{k\to\infty}\lim_{\varepsilon\to0}
				\frac12\int_\Omega
				A^{ijk\ell}(\varepsilon)
				E_{k\|\ell}
				(\varepsilon;\boldsymbol v^{\varepsilon,n,k},
				\boldsymbol m^\varepsilon)
				E_{i\|j}
				(\varepsilon;\boldsymbol v^{\varepsilon,n,k},
				\boldsymbol m^\varepsilon)
				\sqrt{g(\varepsilon)}\,\mathrm{d} x\nonumber\\
				&\hspace{20mm}=
				\frac12\int_\omega
				a^{\alpha\beta\sigma\tau}
				F_{\sigma\tau}^nF_{\alpha\beta}^n
				\sqrt a\,\mathrm{d} y.
			\end{align}
			
			The same magnetic recovery sequence satisfies
			\begin{align}
				\label{sec5-eqn17}
				&\lim_{\varepsilon\to0}
				\frac12\int_\Omega
				g^{ij}(\varepsilon)g^{pq}(\varepsilon)
				m_{p\|i}(\varepsilon;\boldsymbol m^\varepsilon)
				m_{q\|j}(\varepsilon;\boldsymbol m^\varepsilon)
				\sqrt{g(\varepsilon)}\,\mathrm{d} x \nonumber\\
				&\hspace{20mm}=
				\int_\omega|\nabla_S\boldsymbol m_S|^2\sqrt a\,\mathrm{d} y,
			\end{align}
			and
			\begin{equation}
				\label{sec5-eqn18}
				\lim_{\varepsilon\to0}
				\frac{\mu_0}{2\varepsilon}
				\int_{\mathbb R^3}
				|\nabla v_{\varepsilon,\boldsymbol m^\varepsilon}|^2\,\mathrm{d} \xi
				=
				\frac1{\mu_0}\int_\omega m_3^2\sqrt a\,\mathrm{d} y.
			\end{equation}
			Finally,
			\begin{align}
				\label{sec5-eqn19}
				\lim_{\varepsilon\to0}
				\int_\Omega f^iv_i^{\varepsilon,n,k}
				\sqrt{g(\varepsilon)}\,\mathrm{d} x
				&=
				\int_\omega p^i\eta_i^n\sqrt a\,\mathrm{d} y,\nonumber\\
				\lim_{\varepsilon\to0}
				\int_\Omega h^im_i^\varepsilon
				\sqrt{g(\varepsilon)}\,\mathrm{d} x
				&=
				\int_\omega q^im_i\sqrt a\,\mathrm{d} y.
			\end{align}
			
			Combining \eqref{sec5-eqn16}--\eqref{sec5-eqn19}, then letting
			$k\to\infty$ and $n\to\infty$, gives
			\[
			\lim_{n\to\infty}\lim_{k\to\infty}
			\lim_{\varepsilon\to0}
			\mathcal G(\varepsilon;
			\boldsymbol v^{\varepsilon,n,k},\boldsymbol m^\varepsilon)
			=
			\mathcal G_M(\boldsymbol\eta,\boldsymbol m).
			\]
			A standard diagonal argument yields choices
			$n=n(\varepsilon)\to\infty$ and
			$k=k(\varepsilon)\to\infty$ produces an admissible sequence converging
			strongly in $\boldsymbol{\mathcal X}$ and satisfying
			\eqref{sec5-eqn5}.
		\end{proof}
		
		\begin{proof}[Proof of Theorem \ref{sec5-thm1}]
			Lemma \ref{sec5-lem1} proves the compactness results. Lemma \ref{sec5-lem3} is the $\Gamma$-liminf inequality, while Lemma \ref{sec5-lem4} supplies a
			recovery sequence and therefore the $\Gamma$-limsup inequality. Our proof is complete.
		\end{proof}
		
		To conclude the paper, we have the following, which is a direct consequence of Theorem \ref{sec5-thm1}.
		
		\begin{cor}[Convergence of minima and minimizers]
			Problem \hyperref[membrane]{$\mathcal P_M(\omega)$} admits at least one
			solution and
			\begin{equation*}
				\lim_{\varepsilon\to0}
				\min_{\boldsymbol{\mathcal A}(\varepsilon;\Omega)}
				\mathcal G(\varepsilon;\cdot,\cdot)
				=
				\min_{\boldsymbol{\mathcal A}_M(\omega)}
				\mathcal G_M.
			\end{equation*}
			Let
			$(\boldsymbol u(\varepsilon),\boldsymbol m(\varepsilon))$ be a family
			of solutions of Problem
			\hyperref[problem-scaled]{$\mathcal P(\varepsilon;\Omega)$}. Then every sequence $\varepsilon\to0$ has a subsequence, not relabeled, and a
			solution $(\boldsymbol\zeta,\boldsymbol m)$ of Problem
			\hyperref[membrane]{$\mathcal P_M(\omega)$} such that
			\begin{align*}
				\overline u_\alpha(\varepsilon)&\to\zeta_\alpha
				&&\text{strongly in }H^1(\omega),\\
				\overline u_3(\varepsilon)&\to\zeta_3
				&&\text{strongly in }L^2(\omega),\\
				u_i(\varepsilon)&\to\zeta_i
				&&\text{strongly in }L^2(\Omega),\\
				m_i(\varepsilon)&\to m_i
				&&\text{strongly in }H^1(\Omega).
			\end{align*}
		\end{cor}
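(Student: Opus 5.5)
The plan follows the standard route from $\Gamma$-convergence plus equi-coercivity to convergence of minima and minimizers. The convergence of the magnetization in $H^1$ and of the displacements in the strong norms is then upgraded from weak to strong by an energy-convergence argument.

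\emph{Step 1: existence and convergence of minimum values.} Let $(\boldsymbol u(\varepsilon),\boldsymbol m(\varepsilon))$ be minimizers, which exist by Theorem \ref{sec4-thm1}. Comparing with the admissible pair $(\boldsymbol 0,\boldsymbol m^\varepsilon)$, where $\boldsymbol m^\varepsilon$ is the normal-fibre extension \eqref{sec5-eqn9} of a fixed element of $\boldsymbol M_S(\omega)$, Lemma \ref{sec5-lem2} gives $\sup_\varepsilon \mathcal G(\varepsilon;\boldsymbol u(\varepsilon),\boldsymbol m(\varepsilon))<\infty$. (If $\boldsymbol M_S(\omega)$ needs a witness, $m_3\equiv1$, $m_\alpha\equiv0$ works.) Lemma \ref{sec5-lem1} then yields a subsequence and a limit $(\boldsymbol\zeta,\boldsymbol m)\in\boldsymbol{\mathcal A}_M(\omega)$ with the convergences \eqref{sec5-eqn3}. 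For any $(\boldsymbol\eta,\boldsymbol n)\in\boldsymbol{\mathcal A}_M(\omega)$, take the recovery sequence of Lemma \ref{sec5-lem4} and combine it with Lemma \ref{sec5-lem3}:
\[
\mathcal G_M(\boldsymbol\zeta,\boldsymbol m)\le\liminf \mathcal G(\varepsilon;\boldsymbol u(\varepsilon),\boldsymbol m(\varepsilon))\le\limsup \mathcal G(\varepsilon;\boldsymbol u(\varepsilon),\boldsymbol m(\varepsilon))\le \mathcal G_M(\boldsymbol\eta,\boldsymbol n).
\]
Hence $(\boldsymbol\zeta,\boldsymbol m)$ minimizes $\mathcal G_M$, so Problem $\mathcal P_M(\omega)$ is solvable. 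Choosing $(\boldsymbol\eta,\boldsymbol n)=(\boldsymbol\zeta,\boldsymbol m)$ turns every inequality into an equality. Since every subsequence has a further subsequence with the same limit value, the full family of minimum values converges.

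\emph{Step 2: strong convergence via energy equality.} The liminf inequality is a sum of separate lower bounds: elastic, exchange \eqref{sec5-eqn7}, magnetostatic \eqref{sec5-eqn8}, and the two linear loading terms, which converge outright. Because the total energies converge, each lower-bounded term must converge to its limit. For the exchange term, this equality together with the weak $H^1$ convergence of $\boldsymbol{\mathfrak m}_\varepsilon$ gives strong convergence, as follows. The scaled exchange energy dominates $\frac12\int_\Omega g^{\alpha\beta}(\varepsilon)\partial_\alpha\boldsymbol{\mathfrak m}_\varepsilon\cdot\partial_\beta\boldsymbol{\mathfrak m}_\varepsilon\sqrt{g(\varepsilon)}$ plus $\frac1{2\varepsilon^2}\int|\partial_3\boldsymbol{\mathfrak m}_\varepsilon|^2$, up to $O(\varepsilon)$-weighted cross terms. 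This is a uniformly convex quadratic form with uniformly converging coefficients, so convergence of norms plus weak convergence yields $\partial_\alpha\boldsymbol{\mathfrak m}_\varepsilon\to\partial_\alpha\boldsymbol m_S$ strongly and $\partial_3\boldsymbol{\mathfrak m}_\varepsilon\to0$ strongly. Writing $m_i(\varepsilon)=\boldsymbol{\mathfrak m}_\varepsilon\cdot\boldsymbol g_i(\varepsilon)$ with $\boldsymbol g_i(\varepsilon)\to\boldsymbol a_i$ in $\mathcal C^1$ transfers this to $m_i(\varepsilon)\to m_i$ in $H^1(\Omega)$.

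For the elastic part, the elastic energies converge to $\frac12\int a^{\alpha\beta\sigma\tau}\mathcal E_{\sigma\tau}\mathcal E_{\alpha\beta}\sqrt a$. I would expand the quadratic form around the plane-stress optimal profile \eqref{sec5-eqn12}. Since \eqref{sec5-eqn11} is an inequality with a positive definite remainder, the following hold strongly in $L^2(\Omega)$: $E_{\alpha\|\beta}(\varepsilon)\to\mathcal E_{\alpha\beta}$, $E_{\alpha\|3}(\varepsilon)\to0$, and $E_{33}(\varepsilon)$ converges to the optimal value. The remainder argument is the usual one in \cite[Theorem 4.4-1]{Cia00}: write the energy as the plane-stress part plus a coercive quadratic in $(T_{\alpha3},T_{33}-\text{optimal})$. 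The magnetic strains $e_{ij}(\varepsilon;\boldsymbol m(\varepsilon))$ converge strongly, so $\epsilon_{i\|j}(\varepsilon;\boldsymbol u(\varepsilon))$ converges strongly to the corresponding limits.

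\emph{Step 3: strong convergence of displacements.} This is the step I expect to be the main obstacle. The Korn inequality of Theorem \ref{sec4-thm2} applies to $\boldsymbol u(\varepsilon)$ but not directly to $\boldsymbol u(\varepsilon)-\boldsymbol\zeta$, because $\boldsymbol\zeta$ has only $L^2$ normal component. I would follow Ciarlet's proof for elliptic membranes and work with the transverse averages $\overline u_i(\varepsilon)=\frac12\int_{-1}^1u_i(\varepsilon)\,\mathrm dx_3$. First, strong convergence of $\epsilon_{\alpha\|\beta}$ and of $\varepsilon^{-1}\partial_3u_i$-related combinations shows that $\gamma_{\alpha\beta}(\overline{\boldsymbol u}(\varepsilon))\to\gamma_{\alpha\beta}(\boldsymbol\zeta)$ strongly in $L^2(\omega)$. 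The $O(\varepsilon)$ corrections in the Christoffel symbols are handled by Lemma \ref{sec4:lem1} and the bound \eqref{sec5-eqn6}. Since $\overline{\boldsymbol u}(\varepsilon)-\boldsymbol\zeta\in\boldsymbol V_M(\omega)$, Theorem \ref{sec2-thm1} then gives $\overline u_\alpha(\varepsilon)\to\zeta_\alpha$ in $H^1(\omega)$ and $\overline u_3(\varepsilon)\to\zeta_3$ in $L^2(\omega)$. Finally, $\|u_i(\varepsilon)-\overline u_i(\varepsilon)\|_{L^2(\Omega)}\le C\|\partial_3u_i(\varepsilon)\|_{L^2(\Omega)}$ by Poincaré in $x_3$. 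The right-hand side is $O(\varepsilon)$ for $i=3$. For $i=\alpha$, write $\partial_3u_\alpha=2\varepsilon\epsilon_{\alpha\|3}-\varepsilon\partial_\alpha u_3+\dots$; the term $\varepsilon\partial_\alpha u_3$ tends to $0$ in $H^{-1}$ and only weakly in $L^2$. Here I would instead use $\|\partial_3 u_\alpha\|_{L^2}\le C\varepsilon\|u\|$-type estimates from the full 3D Korn bound, which gives $\|\partial_3u_\alpha(\varepsilon)\|_{L^2}\to0$. Combining these gives $u_i(\varepsilon)\to\zeta_i$ strongly in $L^2(\Omega)$.
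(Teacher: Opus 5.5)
Your overall route is sound, and it does more than the paper, which only calls the corollary a direct consequence of Theorem \ref{sec5-thm1}. $\Gamma$-convergence plus equi-coercivity yields only convergence of the minimum values and the weak convergences \eqref{sec5-eqn3} of minimizers, so the strong convergences need extra arguments like yours.
\begin{itemize}
\item The energy splitting is valid: the magnetostatic and loading terms converge outright, while the elastic and exchange terms each have their own liminf bound.
\item The exchange energy is exactly $\frac12\int_\Omega\big[g^{\alpha\beta}(\varepsilon)\partial_\alpha\boldsymbol{\mathfrak m}_\varepsilon\cdot\partial_\beta\boldsymbol{\mathfrak m}_\varepsilon+\varepsilon^{-2}|\partial_3\boldsymbol{\mathfrak m}_\varepsilon|^2\big]\sqrt{g(\varepsilon)}\,\mathrm{d}x$. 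Since $g^{\alpha3}(\varepsilon)=0$, no cross terms arise.
\item The plane-stress remainder gives $\epsilon_{\alpha\|\beta}(\varepsilon;\boldsymbol u(\varepsilon))\to\gamma_{\alpha\beta}(\boldsymbol\zeta)$ strongly.
\item Applying Theorem \ref{sec2-thm1} to $\overline{\boldsymbol u}(\varepsilon)-\boldsymbol\zeta\in\boldsymbol V_M(\omega)$ correctly sidesteps the fact that $\boldsymbol u(\varepsilon)-\boldsymbol\zeta\notin\boldsymbol V(\Omega)$.
\end{itemize}

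The one step that fails is the last one for $i=\alpha$. Theorem \ref{sec4-thm2} only bounds $\|u_\alpha(\varepsilon)\|_{H^1(\Omega)}$, so $\partial_3u_\alpha(\varepsilon)$ is bounded, not small, and no estimate $\|\partial_3u_\alpha(\varepsilon)\|_{L^2(\Omega)}\le C\varepsilon$ is available. Such a bound would be too strong. Through $\partial_3u_\alpha=2\varepsilon\epsilon_{\alpha\|3}-\varepsilon\partial_\alpha u_3+2\varepsilon\Gamma^\sigma_{\alpha3}(\varepsilon)u_\sigma$, it would make $\partial_\alpha u_3(\varepsilon)$ bounded in $L^2(\Omega)$, and hence force $\zeta_3\in H^1_0(\omega)$. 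That is false in general for elliptic membranes, where the boundary condition on the normal component is lost in the limit.

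You do not need this step. By Lemma \ref{sec5-lem1}, $u_\alpha(\varepsilon)\rightharpoonup\zeta_\alpha$ weakly in $H^1(\Omega)$, so Rellich's theorem gives $u_\alpha(\varepsilon)\to\zeta_\alpha$ strongly in $L^2(\Omega)$ directly. Your Poincar\'e argument is needed only for $i=3$, where it is correct.

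If you also want $\partial_3u_\alpha(\varepsilon)\to0$, i.e.\ strong convergence in $H^1(\Omega)$, argue as follows. Apply Theorem \ref{sec4-thm2} to $\boldsymbol u(\varepsilon)-\boldsymbol v^{\varepsilon,n,k}\in\boldsymbol V(\Omega)$, with $\boldsymbol v^{\varepsilon,n,k}$ the displacements of Lemma \ref{sec5-lem4} built for $\boldsymbol\zeta$. Your Step 2 gives strong limits of all scaled strain components of $\boldsymbol u(\varepsilon)$. These coincide with the iterated limits ($\varepsilon\to0$, then $k\to\infty$, then $n\to\infty$) of the strains of $\boldsymbol v^{\varepsilon,n,k}$.
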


		\section*{Declarations}
		The author has no conflicts of interest to declare that are relevant to the content of this article.
		
		\section*{Data availability}
		No data were generated or analysed in this study.

		\section*{Acknowledgements} 
		The author has been partially supported by the grant PRIMUS/24/SCI/020 of Charles University and by the project Ferroic Multifunctionalities (FerrMion) [reg. no. CZ.02.01.01/00/22 008/0004591].

		\newcommand \auth{} 
		\newcommand \jour {} 
		\newcommand \book {}

	\end{document}